\pgfplotsset{compat=1.18}
\numberwithin{equation}{section}
\theoremstyle{plain} 
\newtheorem{prop}[equation]{Proposition}
\newtheorem{cor}[equation]{Corollary}
\newtheorem{thm}[equation]{Theorem}
\newtheorem{lemma}[equation]{Lemma}
\newtheorem{rmk}[equation]{Remark}
\newtheorem{hypothesis}[equation]{Hypothesis}
\newtheorem{clm}[equation]{Claim}
\theoremstyle{definition}
\newtheorem{defi}[equation]{Definition}
\DeclareMathOperator*{\colim}{colim}
\definecolor{lightblueforque}{HTML}{8fcaff}
\newtheorem{protoerr}{ERROR}
\newtheorem{protoque}[protoerr]{QUESTION}
\newtheorem{prototbe}[protoerr]{To be expanded}
\newtheorem{protoysr}[protoerr]{You should remember}
\definecolor{color1}{rgb}{1,0.84,0}
\definecolor{color2}{rgb}{0.08,0.4,0.75}
\definecolor{color3}{rgb}{0.83,0.19,0.19}
\definecolor{color4}{rgb}{0.23,0.88,0.28}
\definecolor{color5}{rgb}{0.8,0,0.6}
\newcounter{tikzcomd}
\title{Coproduct idempotent algebras over internal operads in enriched $\infty$-categories.}
\author{Federico Ernesto Mocchetti\footnote{Università degli Studi di Milano - Universit{\"a}t Osnabr{\"u}ck}}
\date{\today}
\begin{document}
\maketitle

\begin{abstract}
    {\footnotesize 
    In \cite{Heine2023a}, H. Heine shows that given a symmetric monoidal $\infty$-category $\mathcal{V}$ and a weakly $\mathcal{V}$-enriched monad $T$ over an $\infty$-category $\mathcal{C}$, then there is an induced action of $\mathcal{V}$ on $LMod_T(\mathcal{C})$. Moreover, properties like tensoring or enrichment can be transferred from the action on $\mathcal{C}$ to that on $LMod_T(\mathcal{C})$.
    
    We see that the action of an internal operad $O \in Alg(sSeq(\mathcal{C}))$ can be interpreted as the action of a monad $T_O$, such that $Alg_O(\mathcal{C})\cong LMod_{T_O}(\mathcal{C})$. 
    We can then prove that, under a presentability assumption, if the category $\mathcal{C}$ admits cotensors with respect to the action of $\mathcal{V}$, then so does $Alg_O(\mathcal{C})\cong LMod_{T_O}(\mathcal{C})$.
    This is used to show that the coproduct-idempotent algebras are fixed by the induced tensoring action. 
    We apply this to the stable motivic homotopy category and prove that the tensor of any motivic sphere with rational motivic cohomology is equivalent to the latter.

    }
    
\end{abstract}

\thispagestyle{empty}

\setcounter{tikzcomd}{0}
\setcounter{section}{-1}

\section{Introduction}

Let S be a nice scheme (for instance, quasi-separated and quasi-compact) and let $R \in \mathcal{SH}(S)$ be a ring spectrum in the motivic stable homotopy category over it. One can define a motivic version of Hochschild homology \cite{DHOO2022} via the (derived) smash product:
\[
MHH(R) = R \wedge_{R\wedge R^{op}} R.
\]

In the event $R \in Alg_{\mathbb{E}_{\infty}}(\mathcal{SH}(S))$ is a highly commutative ring spectrum, one has equivalences:
\begin{gather*}
    R \cong R^{op} \\
    R \cong |Bar(R)|=|\cdots \to R \wedge R \to R \to \mathbb{S}|.
\end{gather*}
Here, the vertical bars $| - |$ indicate the geometric realisation functor from simplicial objects in an infinity category to the infinity category itself, in other words, the colimit along the indexing category of the objects sitting in the various degrees. 
$Bar(-)$ denotes the standard resolution.

Then we have that:
\[
MHH(R) \cong R \wedge_{R \wedge R} R \cong |R \wedge_{R \wedge R} Bar(R)| \cong S^1 \otimes R
\]
is the geometric realisation of the cyclic bar complex: in degree $n$ it presents the $(n+1)$-fold coproduct of $R$ with itself in the category of $\mathbb{E}_{\infty}$ ring spectra; boundary maps are given according to the simplicial model of $S^1$ as $\Delta^1/\partial \Delta^1$, the 1-simplex modulo its boundary \cite[Proposition IV.2.2]{NikSch2018}.

This expression immediately generalises: one defines the tensor of a (small) simplicial set and an $\mathbb{E}_{\infty}$ motivic ring spectrum as the geometric realisation of the associated simplicial object, in other words, as a colimit along the category $\Delta^{op}$. The goal of this paper is to further extend this to an action of all motivic spaces (in which simplicial sets embed as constant presheaves) on highly commutative motivic ring spectra. This will allow us to prove that any idempotent highly commutative motivic ring spectra are fixed by the extended action. To do so, we move into a much broader setting and prove an analogous result for general left-tensored $\infty$-categories.

In section \ref{sec:fun.of.mods}, we compare the left module categories arising from different actions on a certain $\infty$-category. In particular, we prove that given $f:\mathcal{C}^{\otimes}\to \mathcal{C'}^{\otimes}$ monoidal, with $\mathcal{C}^{\otimes}$ and $\mathcal{C'}^{\otimes}$ acting compatibly on a third infinity category $\mathcal{M}$, then for each $A \in Alg_{Assoc}(\mathcal{C})$, $LMod_A(\mathcal{M}) \cong LMod_{f(A)}(\mathcal{M})$ (theorem \ref{thm:LModA=LModf(A)}).

In section \ref{sec:enr.adj} we apply the previous result. We introduce the notion of operads internal to a symmetric monoidal $\infty$-category and associate to each a monad. The results from the previous section prove an identification between the $\infty$-category of algebras over an operad and left modules over the associated monad. We then move to the context of enriched categories, operads and monads. 
We investigate when different kinds of enrichment or actions can be transported from the base category to the category of modules (proposition \ref{prop:tensor.algebras} and corollary \ref{cor:cotensors.on.algo(C)}).

Section \ref{sec:idemp.enr} focuses on the proof of the following result: an algebra over an internal operad which is coproduct idempotent is invariant with respect to the induced action (theorem \ref{thm:X.otimes.R.cong.R}).

Finally, in the last section, we apply this result to the stable motivic homotopy category: the previous results allow us to recover an action of the unstable motivic homotopy category on highly commutative motivic ring spectra. As an example application of theorem \ref{thm:X.otimes.R.cong.R}, we show that the action of any motivic sphere on rational motivic cohomology is trivial (corollary \ref{cor:MQ.otimes.anything}).

\vspace{1cm}

My deepest thanks go to my supervisors Paul Arne {\O}stv{\ae}r and Markus Spitzweck for proposing this topic and offering ongoing support and guidance throughout the preparation of this paper. I also wish to express my heartfelt thanks to Hadrian Heine for numerous discussions and clarifications regarding the technical aspects of enriched $\infty$-categories.
\clearpage

\tableofcontents

\vspace{1.5cm}

\section{A functor of modules.}\label{sec:fun.of.mods}

Let $f: \mathcal{C}^{\otimes} \to \mathcal{C'}^{\otimes}$ be a monoidal functor, compatible with the actions of $\mathcal{C}^{\otimes}$ on some category $\mathcal{M}$ and of $\mathcal{C'}^{\otimes}$ on some category $ \mathcal{M'}$; suppose, moreover, that there is an isomorphism of simplicial sets $\mathcal{M}\xrightarrow{\sim} \mathcal{M'}$. Let $A \in Alg_{Assoc}(\mathcal{C})$ be an associative algebra in $\mathcal{C}$. The goal of this section is to show that $LMod_A(\mathcal{M}) \simeq LMod_{f(A)}(\mathcal{M'})$.

To address the problem, it is first necessary to specify what \textit{compatible actions} means in higher category theory. In particular, recall that in this context the categories $LMod_A(\mathcal{M})$ and $LMod_{f(A)}(\mathcal{M'})$ are just fibres above a certain associative algebra or more general module categories $LMod(\mathcal{M})$ and $LMod(\mathcal{M'})$; the notion should then be formulated at such level at least. We will be using both the standard and the planar operad formalisms, denoting the first with a $\otimes$ and the second with a $\varoast$; the equivalence between these two formalisms for modules in the $\infty$-categorical setting can be found in \cite[Sections 4.2.1 and 4.2.2]{Lurie2017}.

Let $F:\mathcal{O}^{\otimes} \to \mathcal{O}'^{\otimes}$ be a fibration of $\mathcal{LM}$ operads. Let $\mathcal{C}^{\otimes}=\mathcal{O}^{\otimes} \times_{\mathcal{LM}^{\otimes}} Assoc^{\otimes}$, $\mathcal{C'}^{\otimes}=\mathcal{O'}^{\otimes} \times_{\mathcal{LM}^{\otimes}} Assoc^{\otimes}$ be the underlying monoidal categories and $\mathcal{M}=\mathcal{O}^{\otimes} \times_{\mathcal{LM}^{\otimes}} \{m\}$ and $\mathcal{M}'=\mathcal{O}'^{\otimes} \times_{\mathcal{LM}^{\otimes}} \{m\}$ be the underlying weakly enriched categories. The universal property of the pullback induces a commutative diagram:

\[\begin{tikzcd}
    &\color{gray}\mathcal{C}'^{\otimes} 
        \arrow[rd, gray] 
        \arrow[dd, gray] &\\
    \mathcal{C}^{\otimes} 
        \arrow[ur, gray, "f^{\otimes}"] 
        \arrow [rr] 
        \arrow[dd] & &
    Assoc^{\otimes} 
        \arrow[dd, hook]\\
    & \color{gray}\mathcal{O}'^{\otimes} 
        \arrow[rd, gray] & \\
    \mathcal{O}^{\otimes} 
        \arrow[rr] 
        \arrow[ur, gray, "F"] & &
    \mathcal{LM}^{\otimes}
        \arrow[uull, phantom, "\lrcorner", very near end] 
        \arrow[uuul, phantom, "\lrcorner", very near end, gray] 
        \arrow[ddll, phantom, "\urcorner", very near end]
        \arrow[dl, phantom, "\urcorner", very near end, gray]\\
    &\color{gray}\mathcal{M}'
        \arrow[rd, gray] 
        \arrow[uu, gray] &\\
    \mathcal{M} 
        \arrow[ur, gray, "g"]
        \arrow[rr] 
        \arrow[uu] &&
    \{m\} 
        \arrow[uu, hook]
\end{tikzcd}\addtocounter{tikzcomd}{1}\]

Passing to the planar operad description \cite[Section 4.1.3. and Notation 4.2.2.17.]{Lurie2017}, one obtains a commutative diagram:

\begin{equation} \label{eq:square}
\begin{tikzcd} 
    \mathcal{M}^{\varoast} \arrow[r, "g^{\varoast}"] \arrow[d] & \mathcal{M}'^{\varoast} \arrow [d]\\
    \mathcal{C}^{\varoast} \arrow[d] \arrow[r, "f^{\varoast}"] & \mathcal{C}'^{\varoast} \arrow[dl]\\
    N(\Delta)^{op} &
\end{tikzcd}\addtocounter{tikzcomd}{1}
\end{equation}

Recall that $\mathcal{C}^{\varoast}$ is defined in terms of $\mathcal{C}^{\otimes}$ (or of $\mathcal{O}^{\otimes}$) via the pullback diagram:
\[\begin{tikzcd}
    \mathcal{C}^{\varoast}
        \arrow[r]
        \arrow[d]&
    N(\Delta)^{op}
        \arrow[d,"Cut"]\\
    \mathcal{C}^{\otimes}
        \arrow[r]
        \arrow[d]&
    Assoc^{\otimes} 
        \arrow[d, hook]\\
    \mathcal{O}^{\otimes} 
        \arrow[r]&
    \mathcal{LM}^{\otimes}
        \arrow[ul, phantom, "\lrcorner", very near end]
        \arrow[uul, phantom, "\lrcorner", very near end]\\
\end{tikzcd}\addtocounter{tikzcomd}{1}\]
where the functor $Cut$ is described in \cite[Construction 4.1.2.9]{Lurie2017}. The arrow $f^{\varoast}: \mathcal{C}^{\varoast} \to \mathcal{C}'^{\varoast}$ over $N(\Delta)^{op}$ is a consequence of the universal property of the pullback:
\[\begin{tikzcd} 
    &\color{gray}\mathcal{C'}^{\varoast}
        \arrow[dd, gray]
        \arrow[rd, gray]&\\
    \mathcal{C}^{\varoast}
        \arrow[rr]
        \arrow[dd]
        \arrow[ru, gray, "f^{\varoast}"]&&
    N(\Delta)^{op}
        \arrow[dd,"Cut"]\\
    &\color{gray}\mathcal{C}'^{\otimes} 
        \arrow[rd, gray]  &\\
    \mathcal{C}^{\otimes} 
        \arrow[ur, gray, "f^{\otimes}"] 
        \arrow [rr]& &
    Assoc^{\otimes}\\
\end{tikzcd}\addtocounter{tikzcomd}{1}\]

On the other hand, $\mathcal{M}^{\varoast}$ is defined in terms of a universal property \cite[Notation 4.2.2.17]{Lurie2017}; one first defines $\overline{\mathcal{M}}^{\varoast}$ as the simplicial set over $N(\Delta)^{op}$ satisfying:
\begin{equation}\label{eq:univ.prop.M}
    Hom_{sSet_{/N(\Delta)^{op}}}(K, \overline{\mathcal{M}}^{\varoast}) \cong Hom_{sSet_{/\mathcal{LM}^{\otimes}}}(K \times \Delta^1, \mathcal{O}^{\otimes})
\end{equation}
where the map $K \times \Delta^1 \to \mathcal{LM}^{\otimes}$ is given by the composition:
\[
    K \times \Delta^1 \to N(\Delta^{op}) \times \Delta^1 \xrightarrow{\gamma} \mathcal{LM}^{\otimes}
\]
and $\gamma$ represents a natural transformation from the functor $LCut:N(\Delta^{op}) \to \mathcal{LM}^{\otimes}$ to the functor $Cut:N(\Delta^{op}) \to \mathcal{LM}^{\otimes}$, see \cite[Remark 4.2.2.8]{Lurie2017} for more details. Observe that vertices in $\overline{\mathcal{M}}^{\varoast}$ (i.e. morphisms $\Delta^0 \to \overline{\mathcal{M}}^{\varoast}$) correspond to one-simplices, or arrows, in $\mathcal{O}^{\otimes}$. $\mathcal{M}^{\varoast}$ is given by the sub-simplicial set of $\overline{\mathcal{M}}^{\varoast}$ spanned by vertices corresponding to inert morphisms in $\mathcal{O}^{\otimes}$. The morphism of operads:
\[\begin{tikzcd} 
    \mathcal{O}^{\otimes} \arrow[rr, "F"] \arrow[rd] &&
    \mathcal{O}'^{\otimes} \arrow[ld] \\
    & \mathcal{LM}^{\otimes}&
\end{tikzcd}\addtocounter{tikzcomd}{1}\]
induces then the arrow over $N(\Delta)^{op}$:
\[\begin{tikzcd} 
    \mathcal{M}^{\varoast} \arrow[rr, "g^{\varoast}"] \arrow[rd] &&
    \mathcal{M}'^{\varoast} \arrow[ld] \\
    & N(\Delta)^{op}&
\end{tikzcd}\addtocounter{tikzcomd}{1}\]
By construction, a simplex in $\mathcal{M}^{\varoast}$ corresponds to a pair of simplices in $\mathcal{O}^{\otimes}$ (given by the endpoints of $\Delta^1$), one of which of ``module'' type (in other words, associated to the functor $LCut$), and the other of algebra type (associated to $Cut$). At the level of points, this produces a module object and its underlying algebra. Similar facts hold for $\mathcal{M}'^{\varoast}$. The vertical arrows $\mathcal{M}^{\varoast} \to \mathcal{C}^{\varoast}$ and $\mathcal{M}'^{\varoast} \to \mathcal{C}'^{\varoast}$, given as in \cite[Remark 4.2.2.19]{Lurie2017}, send a simplex in the module category to the algebra part of that simplex.
It follows from the definitions that the square in \ref{eq:square} is commutative.

Module categories can be determined as follows \cite[Remark 4.2.2.19]{Lurie2017}: given $\mathcal{M}^{\varoast} \xrightarrow{q} \mathcal{C}^{\varoast} \xrightarrow{p} N(\Delta)^{op}$, then:
\begin{itemize}
    \item $Alg_{\mathbb{A}_{\infty}}(\mathcal{C})$ is the full subcategory of $Fun_{N(\Delta)^{op}}({N(\Delta)^{op}}, \mathcal{C}^{\varoast})$ of sections of $p$ that preserve inert morphisms \cite[Definition 4.1.3.16]{Lurie2017}
    \item $LMod^{\mathbb{A}_{\infty}}(\mathcal{M})$ is given by the full subcategory of  $Fun_{N(\Delta)^{op}}({N(\Delta)^{op}}, \mathcal{M}^{\varoast})$ of sections of $p \circ q$ corresponding via \ref{eq:univ.prop.M} to a certain full subcategory of $ Fun_{\mathcal{LM}^{\otimes}}({N(\Delta)^{op}} \times \Delta^1, \mathcal{O}^{\otimes})$ described in \cite[Definition 4.2.2.10]{Lurie2017}, see also definition \ref{def:LMod(M)} below.
    \item The inclusion $N(\Delta)^{op} \times \{1\} \hookrightarrow N(\Delta)^{op} \times \Delta^1$ induces a forgetful functor $LMod^{\mathbb{A}_{\infty}}(\mathcal{M})\to Alg_{\mathbb{A}_{\infty}}(\mathcal{C})$. Given $A \in Alg_{\mathbb{A}_{\infty}}(\mathcal{C})$, we denote:
    \[
        LMod^{\mathbb{A}_{\infty}}_A(\mathcal{M})= LMod^{\mathbb{A}_{\infty}}(\mathcal{M})\times_{Alg_{\mathbb{A}_{\infty}}(\mathcal{C})} \{A\}.
    \]
    This can be identified via \ref{eq:univ.prop.M} with a certain full subcategory of the functor category:
    \[
    Fun_{N(\Delta)^{op}}(N(\Delta)^{op}, N(\Delta)^{op}\times_{\mathcal{C}^{\varoast}} \mathcal{M}^{\varoast}).
    \]
    
\end{itemize}
This description suits in a diagram of the form:
\[\begin{tikzcd} 
        &N(\Delta)^{op} 
            \arrow[ld, swap, "(2)"] 
            \arrow[d, "(1)"]\\
        \mathcal{M}^{\varoast} \times_{\mathcal{C}^{\varoast}} N(\Delta)^{op} 
            \arrow[r] 
            \arrow[d] & 
        \mathcal{M}^{\varoast}
            \arrow[d, "q"]\\
        N(\Delta)^{op} 
            \arrow[rd, "id"] 
            \arrow[r, "A"]  &
        \mathcal{C}^{\varoast} 
            \arrow[d, "p"]
            \arrow[ul, phantom, "\lrcorner", very near end] \\
        & N(\Delta)^{op} 
\end{tikzcd}\addtocounter{tikzcomd}{1}\]
The objects of $LMod^{\mathbb{A}_{\infty}}(\mathcal{M})$ are then (some) arrows fitting in $(1)$, while (some) arrows fitting in place $(2)$ give the objects of $LMod_A^{\mathbb{A}_{\infty}}(\mathcal{M})$.

\begin{thm}\label{thm:LModA=LModf(A)} 
    In the above situation, suppose further that:
    \begin{itemize}
        \item the maps $\mathcal{O}^{\otimes} \to \mathcal{LM}^{\otimes}$ and $\mathcal{O}'^{\otimes} \to \mathcal{LM}^{\otimes}$ are co-Cartesian fibrations of $\infty$-operads, so that $\mathcal{M}$ is left-tensored on $\mathcal{C}^{\otimes}$ and $\mathcal{M}'$ is left-tensored on $\mathcal{C}'^{\otimes}$,
        \item the functor $g:\mathcal{M} \to \mathcal{M}'$ gives rise to an isomorphism of simplicial sets.
    \end{itemize}  
    Then for each associative algebra $A \in Alg(\mathcal{C})$, we have an equivalence $LMod_A(\mathcal{M}) \cong LMod_{f(A)}(\mathcal{M}')$.
\end{thm}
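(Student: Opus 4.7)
The strategy is to show that the commutative square \ref{eq:square} induces an isomorphism of the pullback simplicial sets that realize both module categories, from which the equivalence will follow. Recall that we have identified $LMod_A(\mathcal{M})$ as a full subcategory of $Fun_{N(\Delta)^{op}}(N(\Delta)^{op}, \mathcal{M}^{\varoast} \times_{\mathcal{C}^{\varoast}} N(\Delta)^{op})$ (with pullback along $A$), and analogously $LMod_{f(A)}(\mathcal{M}')$ as a full subcategory of the functor category built from $\mathcal{M}'^{\varoast} \times_{\mathcal{C}'^{\varoast}} N(\Delta)^{op}$ (with pullback along $f(A) = f^{\varoast} \circ A$). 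The diagram \ref{eq:square} yields a morphism over $N(\Delta)^{op}$
\[
\Phi \colon \mathcal{M}^{\varoast} \times_{\mathcal{C}^{\varoast}} N(\Delta)^{op} \longrightarrow \mathcal{M}'^{\varoast} \times_{\mathcal{C}'^{\varoast}} N(\Delta)^{op},
\]
and the plan is to prove $\Phi$ is an isomorphism of simplicial sets. Granting this, the induced map of functor categories is an iso, and the respective full subcategory conditions correspond under $\Phi_*$ because both translate (via \ref{eq:univ.prop.M}) into a condition on maps $N(\Delta)^{op} \times \Delta^1 \to \mathcal{O}^{\otimes}$ (see \cite[Definition 4.2.2.10]{Lurie2017}) that is preserved by postcomposition with $F$, as $F$ is a fibration of $\mathcal{LM}$-operads.

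To show $\Phi$ is a simplicial isomorphism, I would argue levelwise. Using \ref{eq:univ.prop.M}, an $n$-simplex of the source over $\sigma\colon \Delta^n \to N(\Delta)^{op}$ is a map $\phi\colon \Delta^n \times \Delta^1 \to \mathcal{O}^{\otimes}$ over $\gamma \circ (\sigma \times id)$, satisfying the inert vertex condition defining $\mathcal{M}^{\varoast}$ inside $\overline{\mathcal{M}}^{\varoast}$, with $\phi|_{\Delta^n \times \{1\}} = A \circ \sigma$; an $n$-simplex of the target is described analogously in $\mathcal{O}'^{\otimes}$ with $f(A)$. The map $\Phi$ is simply postcomposition with $F$.

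The main obstacle will be to show this postcomposition is bijective. Given $\phi'$ on the target, one seeks a unique lift $\phi$: on $\Delta^n \times \{1\}$ the lift is forced to be $A \circ \sigma$, automatically matching $\phi'|_{\Delta^n \times \{1\}} = F \circ A \circ \sigma$; on $\Delta^n \times \{0\}$, where $\phi'$ takes values in fibres of $\mathcal{O}'^{\otimes} \to \mathcal{LM}^{\otimes}$ of the form $\mathcal{M}' \times (\mathcal{C}')^k$ (the fibres over $LCut(\sigma(v))$), the $\mathcal{M}'$-component is lifted uniquely through $g^{-1}$, and the $(\mathcal{C}')^k$-component is dictated by compatibility with the $\{1\}$-boundary via the edges $\{v\} \times \Delta^1$ that project onto the morphisms $\gamma_{\sigma(v)} \colon LCut(\sigma(v)) \to Cut(\sigma(v))$. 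Propagating this to higher-dimensional simplices of $\Delta^n \times \Delta^1$ that straddle both boundaries is the most delicate part: here I would invoke the coCartesian hypothesis on $\mathcal{O}^{\otimes}, \mathcal{O}'^{\otimes} \to \mathcal{LM}^{\otimes}$, together with compatibility of $F$ with those coCartesian structures, to reconstruct each such simplex uniquely from its boundary data, and then verify that the result indeed maps to $\phi'$ under $F$.
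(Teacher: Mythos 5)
Your high-level plan --- identify $LMod_A(\mathcal{M})$ and $LMod_{f(A)}(\mathcal{M}')$ with full subcategories of functor categories into the pullbacks $\mathcal{M}^{\varoast}\times_{\mathcal{C}^{\varoast}}N(\Delta)^{op}$ and $\mathcal{M}'^{\varoast}\times_{\mathcal{C}'^{\varoast}}N(\Delta)^{op}$, compare those pullbacks, and then match the defining conditions --- is the same as the paper's. The gap is in the middle step: you claim that $\Phi$ is an \emph{isomorphism of simplicial sets}, to be proved by reconstructing each prism $\Delta^n\times\Delta^1\to\mathcal{O}^{\otimes}$ ``uniquely from its boundary data'' using the coCartesian hypothesis. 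This conflates essential uniqueness with literal uniqueness. In a coCartesian fibration of $\infty$-operads, lifts and fillers form contractible spaces, not singletons; a simplex of $\mathcal{O}^{\otimes}$ is not determined by its boundary; and $F$ being a fibration of $\mathcal{LM}$-operads gives lifting against trivial cofibrations, not unique lifting of arbitrary cells. Moreover only $g$ is assumed to be an isomorphism --- $f$ is not --- so nothing forces the $\mathcal{C}$-components, the connecting (diagonal) edges, or the higher coherence cells of a prism to lift uniquely along $F$. The failure already appears at $n=0$: a vertex of $\mathcal{M}^{\varoast}$ is an inert edge of $\mathcal{O}^{\otimes}$ over $\gamma_{[k]}$, and an inert edge with prescribed image under $F$ and prescribed target $A([k])$ is unique only up to a contractible space of choices. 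So $\Phi$ is in general not a bijection on $n$-simplices for any $n$, and the argument as proposed cannot close.

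What the paper proves instead is only that the relevant comparison map is a \emph{categorical equivalence}, and the tool that replaces your unique-lifting step is the ``co''-version of \cite[Corollary 2.4.4.4]{Lurie2009} (corollary \ref{cor:lurie.2.4.4.4}), applied to the factorisation $\mathcal{M}^{\varoast}\xrightarrow{r}\mathcal{M}'^{\varoast}\times_{\mathcal{C}'^{\varoast}}\mathcal{C}^{\varoast}\xrightarrow{s}\mathcal{C}^{\varoast}$. One checks that $s\circ r\simeq q$ and $s$ are locally coCartesian fibrations (\cite[Lemma 4.2.2.20]{Lurie2017} plus stability of locally coCartesian fibrations under pullback), that $r$ carries locally $q$-coCartesian edges to locally $s$-coCartesian edges, and that $r$ is an equivalence on fibres over $\mathcal{C}^{\varoast}$ (where the hypothesis that $g$ is an isomorphism enters, since each fibre is a copy of $\mathcal{M}$ resp.\ $\mathcal{M}'$). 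The only place where a strict simplicial-set identification is used is for the much smaller pullbacks $\mathcal{M}^{\varoast}\times_{\mathcal{C}^{\varoast}}\Delta^1$ over a \emph{single} edge, whose simplices admit the explicit description via the diagrams $P_{n,m}\to\mathcal{M}$ with a fixed transport map $q(e)_!$; that is where your intuition about reconstructing simplices is legitimate, because the $\mathcal{C}$-direction has been collapsed. If you want to salvage your approach, you should weaken the goal from ``$\Phi$ is an isomorphism'' to ``$\Phi$ is a categorical equivalence'' and supply a fibrewise criterion of this kind; your final step matching the two membership conditions (condition 2 versus the coCartesianness of the corresponding edges over $N(\Delta)^{op}$) then still needs the equivalence to be compatible with the projections to $N(\Delta)^{op}$, as in the paper's reformulation 2'.
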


\begin{proof}
Consider the diagram
\begin{equation}\label{diag:LModA(M)}
\begin{tikzcd} 
    &N(\Delta)^{op} 
        \arrow[ld, swap, "Y"] 
        \arrow[d, "X"]
        \arrow[rd]&\\
    \mathcal{M}^{\varoast} \times_{\mathcal{C}^{\varoast}} N(\Delta)^{op} 
        \arrow[r, "i"] 
        \arrow[d, "j"] & 
    \mathcal{M}^{\varoast}
        \arrow[d, "q"]
        \arrow[r, "g^{\varoast}"]&
    \mathcal{M}'^{\varoast}
        \arrow[d, "q'"]\\
    N(\Delta)^{op} 
        \arrow[rd] 
        \arrow[r, "A"]  &
    \mathcal{C}^{\varoast} 
        \arrow[d, "p"]
        \arrow[ul, phantom, "\lrcorner", very near end]
        \arrow[r, "f^{\varoast}"]&
    \mathcal{C}'^{\varoast}
        \arrow[ld, "p'"]\\
    & N(\Delta)^{op}& 
\end{tikzcd}\addtocounter{tikzcomd}{1}    
\end{equation}
Our aim is to identify $\mathcal{M}^{\varoast} \times_{\mathcal{C}^{\varoast}} N(\Delta)^{op}$ with the pullback of $\mathcal{M}'^{\varoast}$ and  $N(\Delta)^{op}$  over $\mathcal{C}'^{\varoast}$ along the maps $q'$ and $f^{\varoast}(A)$.
To do so, we show that the square:
\[\begin{tikzcd}
    \mathcal{M}^{\varoast}
        \arrow[d, "q"]
        \arrow[r, "g^{\varoast}"]&
    \mathcal{M}'^{\varoast}
        \arrow[d, "q'"]\\
    \mathcal{C}^{\varoast}
        \arrow[r, "f^{\varoast}"]&
    \mathcal{C}'^{\varoast}\\
\end{tikzcd}\addtocounter{tikzcomd}{1}\]
is a pullback itself. We use the following ``co''-version of \cite[Corollary 2.4.4.4]{Lurie2009}:
\begin{cor}\label{cor:lurie.2.4.4.4}
Suppose we are given maps $\mathcal{C} \xrightarrow{r} \mathcal{D} \xrightarrow{s} \mathcal{E}$ of $\infty$-categories such that both $s$ and $s \circ r$ are locally co-Cartesian fibrations. Suppose that $r$ carries locally ($s \circ r$)-co-Cartesian edges of $\mathcal{C}$ to locally $s$-co-Cartesian edges of $\mathcal{D}$ and that for every object $Z \in \mathcal{E}$, the induced map $r_Z:\mathcal{C}_Z \to \mathcal{D}_Z$ is a categorical equivalence. Then $r$ is a categorical equivalence.
\end{cor}

This is applied to the maps $\mathcal{M}^{\varoast} \xrightarrow{r} \mathcal{M}'^{\varoast} \times_{\mathcal{C}'^{\varoast}}\mathcal{C}^{\varoast} \xrightarrow{s} \mathcal{C}^{\varoast}$, where $r$ and $s$ are the canonical maps to and from the pullback. Notice that $s \circ r \simeq q$. There is in fact a diagram:
\[\begin{tikzcd}
    \mathcal{M}^{\varoast}
        \arrow[ddr, swap, bend right=30, "q"]
        \arrow[drr, bend left=20, "g^{\varoast}"]
        \arrow[dr, "r"]&&\\
    &\mathcal{M}'^{\varoast} \times_{\mathcal{C}'^{\varoast}}\mathcal{C}^{\varoast}
        \arrow[r]
        \arrow[d, "s"]
    &\mathcal{M}'^{\varoast}
        \arrow[d, "q'"]\\
    &\mathcal{C}^{\varoast}
        \arrow[r, "f^{\varoast}"]&
    \mathcal{C}'^{\varoast}\\
\end{tikzcd}\addtocounter{tikzcomd}{1}\]

The maps $q\simeq s \circ r$ and $q'$ are locally co-Cartesian fibrations after \cite[Lemma 4.2.2.20]{Lurie2017}. Locally co-Cartesian fibrations are closed under pullback \cite[\href{https://kerodon.net/tag/01V1}{Remark 5.1.5.5}]{Lurie2018}, so $s$ is a locally co-Cartesian fibration as well.

Let now $e:\Delta^1 \to \mathcal{M}^{\varoast}$ be a locally $q$-co-Cartesian edge. By \cite[Definition 5.1.3.1]{Lurie2018}, this means that for every commutative diagram:
\[\begin{tikzcd}
    &\Delta^1 
        \arrow[ld, bend right=10]
        \arrow[d]
        \arrow[rd, "e"]&&\\
    \Lambda^n_0
        \arrow[d]
        \arrow[r]&
    \mathcal{M}^{\varoast}\times_{\mathcal{C}^{\varoast}} \Delta^1 
        \arrow[r]
        \arrow[d]&
    \mathcal{M}^{\varoast}
        \arrow[d, "q"]\\
    \Delta^n
        \arrow[r]
        \arrow[ur, dashed]&
    \Delta^1
        \arrow[r, "q(e)"]&
    \mathcal{C}^{\varoast}
        \arrow[ul, phantom, "\lrcorner", very near end]\\
\end{tikzcd}\addtocounter{tikzcomd}{1}\]
the dotted lift exists, where the pullback $\mathcal{M}^{\varoast}\times_{\mathcal{C}^{\varoast}} \Delta^1$ is formed in simplicial sets and the edge $\Delta^1 \to \mathcal{M}^{\varoast}\times_{\mathcal{C}^{\varoast}} \Delta^1$ is the unique lift of $e$ with non-trivial image in $\Delta^1$. The inclusion $\Delta^1 \to \Lambda^n_0$ is given by: $\Delta^1=\Delta^{\{0,1\}} \to \Lambda^n_0$.

It is worth having a more explicit description of the pullback $\mathcal{M}^{\varoast}\times_{\mathcal{C}^{\varoast}} \Delta^1$. The objects in $\mathcal{M}^{\varoast}$ and in $\mathcal{C}^{\varoast}$ are presented according to the decompositions of \cite[Remark 4.2.2.18]{Lurie2017} and \cite[Notation 4.1.3.5]{Lurie2017}. If the arrow $e$ lies over a map $\alpha: [k] \to [n]$ in $\Delta$, it can be described as:
\begin{equation} \label{eq:e.arrow.in.M}
\begin{gathered}
    e:(C_1, \ldots, C_n, M) \to (D_1, \ldots, D_k, N) \text{ for } C_i,\, D_j \in \mathcal{C},\, M,\,N \in \mathcal{M}\\
    C_{\alpha(j-1)+1} \otimes \cdots \otimes C_{\alpha(j)} \to D_j \text{ in } \mathcal{C} \text{ for } 1 \leq j \leq k \\
    C_{\alpha(k)+1} \otimes \cdots \otimes C_{n} \otimes M \to N \text{ in } \mathcal{M}
\end{gathered}
\end{equation}
$q(e)$ is then given by the arrow:
\[
    q(e):(C_1, \ldots, C_n) \to (D_1, \ldots, D_k)
\]
in $\mathcal{C}^{\varoast}$ described by the same arrows in $\mathcal{C}$ as above.

Pullbacks in simplicial sets can be evaluated level-wise \cite[\href{https://kerodon.net/tag/000J}{Remark 1.1.1.7}]{Lurie2018}. At the level of objects, one has:
\[
    (\mathcal{M}^{\varoast}\times_{\mathcal{C}^{\varoast}} \Delta^1)_0 \cong \mathcal{M}^{\varoast}_0\times_{\mathcal{C}^{\varoast}_0} \Delta^1_0
\]
The description of the map $q(e)$ above implies that this consists of the disjoint union of two copies of $\mathcal{M}$, namely the fibre over $(C_1, \ldots, C_n)$, corresponding to $0 \in \Delta^1_0$, and that over $(D_1, \ldots, D_k)$, corresponding to $1 \in \Delta^1_0$.

At the level of 1-simplices, $\Delta^1_1$ presents a single non-trivial 1-simplex, the one that is sent to $q(e)$. Hence in this level of the pullback:
\[
    (\mathcal{M}^{\varoast}\times_{\mathcal{C}^{\varoast}} \Delta^1)_1 \cong \mathcal{M}^{\varoast}_1\times_{\mathcal{C}^{\varoast}_1} \Delta^1_1
\]
there are (among others) arrows from the first copy of $\mathcal{M}$ to the second copy of $\mathcal{M}$ that lie over $q(e)$. These can be described in $\mathcal{M}^{\varoast}$ as arrows:
\begin{gather*}
    \tilde{e}:(C_1, \ldots, C_n, A) \to (D_1, \ldots, D_k, B) \text{ for } A,\,B \in \mathcal{M}\\
     C_{\alpha(j-1)+1} \otimes \cdots \otimes C_{\alpha(j)} \to D_j \text{ in } \mathcal{C} \text{ for } 1 \leq j \leq k \\
    C_{\alpha(k)+1} \otimes \cdots \otimes C_{n} \otimes A \to B \text{ in } \mathcal{M}\\
    q(\tilde{e})=e \text{ in } \mathcal{C}^{\varoast}
\end{gather*}
where the arrows internal to $\mathcal{C}$ are the same as in \ref{eq:e.arrow.in.M}. 
By identifying the fibres over $0$ and $1$ with $\mathcal{M}$, the arrow $\tilde{e}$ corresponds to the arrows (in $\mathcal{M}$):
\begin{equation}\label{eq:1.simp.of.M.x.D1}
    A \xrightarrow{q(e)_!} C_{\alpha(k)+1} \otimes \cdots \otimes C_{n} \otimes A \to B
\end{equation}

As trivial 1-simplices in $\Delta^1$ correspond to arrows internal to one or the other copy of $\mathcal{M}$, one may conclude that all one simplices of $\mathcal{M}^{\varoast}\times_{\mathcal{C}^{\varoast}} \Delta^1$ are given by certain (specific) diagrams in $\mathcal{M}$. 
A similar description can be obtained for the higher simplices as well, see figure \ref{fig:simplices.in.Delta1.times.M} for some examples.

\begin{figure}
\centering
\resizebox{\textwidth}{!}{ 
\begin{tikzpicture}[line cap=round, line join=round, >=triangle 45]
    
\begin{scope}
    \fill[line width=2pt, color=color2, fill=color2, fill opacity=0.1] (-1, 6) -- (1, 6) -- (1, -1) -- (-1, -1) -- cycle;
    \fill[line width=2pt, color=color4, fill=color4, fill opacity=0.2] (3.330127018922192, 6) -- (5.330127018922195, 6) -- (5.330127018922194, -1) -- (3.3301270189221936, -1) -- cycle;
    \filldraw[line width=2pt, color=color3, fill=color3, fill opacity=1] (10, 2.8) -- (10, 2.2) -- (10.519615242270662, 2.5) -- cycle;
    \fill[line width=0pt, color=color1, fill=color1, fill opacity=0.6] (12, 5) -- (12, 0) -- (17, 0) -- (22, 5) -- cycle;
    \fill[line width=0pt,  color=color1,  fill=color1, fill opacity=0.6] (0, 5) -- (0, 0) -- (4.330127018922194, 2.5) -- cycle;
            
    \draw[line width=2pt, color=color3, smooth, samples=100, domain=6:10] plot(\x, {sin((2*3.14*(\x))*180/pi)/5+2.5});

    \begin{small}
    \draw [->, line width=1pt] (0, 5) -- (0, 0) node[left=3pt,  midway] {$\alpha$};
    \draw [->, line width=1pt] (0, 0) -- (4.330127018922194, 2.5) node[below=3pt,  midway] {$\beta$};
    \draw [->, line width=1pt] (0, 5) -- (4.330127018922194, 2.5) node[above=3pt,  midway] {$\gamma$};
    \draw [->, line width=1pt] (12, 5) -- (12, 0) node[left=3pt,  midway] {$\alpha$};
    \draw [->, line width=1pt] (12, 0) -- (17, 0) node[above=3pt,  midway] {$q(e)_!$};
    \draw [->, line width=1pt] (17, 0) -- (22, 5) node[below right=3pt,  midway] {$\beta$};
    \draw [->, line width=1pt] (12, 5) -- (17, 5) node[below=3pt,  midway] {$q(e)_!$};
    \draw [->, line width=1pt] (17, 5) -- (17, 0)node[left=3pt,  midway] {$q(e)_!\alpha$};
    \draw [->, line width=1pt] (17, 5) -- (22, 5) node[below=3pt,  midway] {$\gamma$};
    
    \draw [fill=color2] (0, 0) circle (4pt) node[left=3pt, color2] {$(C_1,  \ldots,  C_n,  M_1)$};
    \draw [fill=color2] (0, 5) circle (4pt) node[left=3pt, color2] {$(C_1,  \ldots,  C_n,  M_0)$};
    \draw [fill=color2] (4.330127018922194, 2.5) circle (4pt) node[above right=2pt, color2] {$(D_1,  \ldots,  D_k,  M_2)$};
    \draw [fill=color2] (12, 5) circle (4pt) node[left=3pt, color2] {$M_0$};
    \draw [fill=color2] (12, 0) circle (4pt) node[left=3pt, color2] {$M_1$};
    \draw [fill=color2] (17, 0) circle (4pt) node[below=3pt, color2] {$C_{\alpha(k)+1}\otimes \cdots \otimes C_n \otimes M_1$};
    \draw [fill=color2] (17, 5) circle (4pt) node[above=3pt, color2] {$C_{\alpha(k)+1}\otimes \cdots \otimes C_n \otimes M_0$};
    \draw [fill=color2] (22, 5) circle (4pt) node[right=3pt, color2] {$M_2$};
    \end{small}
    
\end{scope}

\begin{scope}[yshift=-8cm]

    \filldraw[line width=2pt,color=color3,fill=color3,fill opacity=1] (10,2.8) -- (10,2.2) -- (10.519615242270662,2.5) -- cycle;
    \fill[line width=2pt,color=color2,fill=color2,fill opacity=0.1] (-1,6) -- (1,6) -- (1,-1) -- (-1,-1) -- cycle;
    \fill[line width=2pt,color=color4,fill=color4,fill opacity=0.2] (3.330127018922193,-1) -- (5.330127018922193,-1) -- (5.330127018922193,6) -- (3.330127018922193,6) -- cycle;
    \fill[line width=2pt,color=color1,fill=color1,fill opacity=0.6] (17,2.5) -- (21.33012701892219,0) -- (21.330127018922198,5) -- cycle;
    \fill[line width=2pt,color=color1,fill=color1,fill opacity=0.6] (4.330127018922193,0) -- (4.330127018922193,5) -- (0,2.5) -- cycle;
    
    \draw[line width=2pt,color=color3,smooth,samples=100,domain=6:10] plot(\x,{sin((2*3.14*(\x))*180/pi)/5+2.5});
    
    \begin{small}        
    \draw [->,line width=1pt] (0,2.5) -- (4.330127018922193,0) node[below=3pt,  midway] {$\alpha$};
    \draw [->,line width=1pt] (0,2.5) -- (4.330127018922193,5) node[above=3pt,  midway] {$\gamma$};
    \draw [->,line width=1pt] (4.330127018922193,0) -- (4.330127018922193,5) node[left=3pt,  midway] {$\beta$};
    \draw [->,line width=1pt] (12,2.5) -- (17,2.5) node[below=3pt,  midway] {$q(e)_!$};
    \draw [->,line width=1pt] (17,2.5) -- (21.33012701892219,0) node[below=3pt,  midway] {$\alpha$};
    \draw [->,line width=1pt] (21.33012701892219,0) -- (21.330127018922198,5) node[right=3pt,  midway] {$\beta$};
    \draw [->,line width=1pt] (17,2.5) -- (21.330127018922198,5) node[above=3pt,  midway] {$\gamma$};
    
    \draw [fill=color2] (0,2.5) circle (4pt) node[left=3pt, color2] {$(C_1,  \ldots,  C_n,  M_0)$};
    \draw [fill=color2] (4.330127018922193,0) circle (4pt) node[right=2pt, color2] {$(D_1,  \ldots,  D_k,  M_1)$};
    \draw [fill=color2] (4.330127018922193,5) circle (4pt) node[right=2pt, color2] {$(D_1,  \ldots,  D_k,  M_2)$};
    \draw [fill=color2] (12,2.5) circle (4pt) node[above=2pt, color2] {$M_0$};
    \draw [fill=color2] (17,2.5) circle (4pt) node[above=3pt, color2] {$C_{\alpha(k)+1}\otimes \cdots \otimes C_n \otimes M_0$};
    \draw [fill=color2] (21.33012701892219,0) circle (4pt) node[above=3pt, color2] {$M_1$};
    \draw [fill=color2] (21.330127018922198,5) circle (4pt) node[above=3pt, color2] {$M_2$};
    \end{small}
    
    \draw (0,-2) circle (0pt);
    
\end{scope}

\end{tikzpicture}
}

\resizebox{\textwidth}{!}{
\begin{tikzpicture}[line cap=round, line join=round, >=triangle 45]  

    \filldraw[line width=2pt,color=color3,fill=color3,fill opacity=1] (10,2.8) -- (10,2.2) -- (10.519615242270662,2.5) -- cycle;
    \fill[line width=2pt,color=color2,fill=color2,fill opacity=0.1] (-1,6) -- (1,6) -- (1,-1) -- (-1,-1) -- cycle;
    \fill[line width=2pt,color=color4,fill=color4,fill opacity=0.2] (4,-1) -- (6,-1) -- (6,6) -- (4,6) -- cycle;
    \fill[line width=2pt,color=color5,fill=color5,fill opacity=0.1] (0,5) -- (0,0) -- (5,0) -- (5,5) -- cycle;
    \fill[line width=2pt,color=color5,fill=color5,fill opacity=0.1] (17,5) -- (17,0) -- (22,0) -- (22,5) -- cycle;
    \fill[line width=2pt,color=color1,fill=color1,fill opacity=0.6] (12,0) -- (17,0) -- (17,5) -- (12,5) -- cycle;
    
    \draw[line width=2pt,color=color3,smooth,samples=100,domain=7:10] plot(\x,{sin((2*3.141592653589793*(\x))*180/pi)/5+2.5});
    
    \begin{small}
    \draw [->,line width=1pt] (0,5) -- (0,0) node[left=3pt,  midway] {$\alpha$};
    \draw [->,line width=1pt] (0,5) -- (5,0) node[left=3pt,  near start] {$\gamma$};
    \draw [->,line width=1pt] (0,5) -- (5,5) node[above=3pt,  midway] {$\delta$};
    \draw [->,line width=1pt] (0,0) -- (5,0) node[below=3pt,  midway] {$\beta$};
    \draw [->,line width=1pt] (0,0) -- (5,5) node[left=3pt,  near start] {$\eta$};
    \draw [->,line width=1pt] (5,0) -- (5,5) node[right=3pt,  midway] {$\theta$};
    
    \draw [->,line width=1pt] (12,5) -- (12,0) node[left=3pt,  midway] {$\alpha$};
    \draw [->,line width=1pt] (12,5) -- (17,5) node[below=3pt,  midway] {$q(e)_!$};
    \draw [->,line width=1pt] (17,5) -- (22,5) node[below=3pt,  midway] {$\delta$};
    \draw [->,line width=1pt] (17,5) -- (17,0) node[left=3pt,  midway] {$q(e)_!\alpha$};
    \draw [->,line width=1pt] (12,0) -- (17,0) node[above=3pt,  midway] {$q(e)_!$};
    \draw [->,line width=1pt] (17,5) -- (22,0) node[left=3pt,  near start] {$\gamma$};
    \draw [->,line width=1pt] (17,0) -- (22,0) node[above=3pt,  midway] {$\beta$};
    \draw [->,line width=1pt] (22,0) -- (22,5) node[right=3pt,  midway] {$\theta$};
    \draw [->,line width=1pt] (17,0) -- (22,5) node[left=3pt,  near start] {$\eta$};
    
    \draw [fill=color2] (0,0) circle (4pt) node[below=3pt, color2] {$(C_1,  \ldots,  C_n,  M_1)$};
    \draw [fill=color2] (5,0) circle (4pt) node[below=3pt, color2] {$(D_1,  \ldots,  D_k,  M_2)$};
    \draw [fill=color2] (5,5) circle (4pt) node[above=3pt, color2] {$(D_1,  \ldots,  D_k,  M_3)$};
    \draw [fill=color2] (0,5) circle (4pt) node[above=3pt, color2] {$(C_1,  \ldots,  C_n,  M_0)$};
    
    \draw [fill=color2] (12,5) circle (4pt) node[above=3pt, color2] {$M_0$};
    \draw [fill=color2] (12,0) circle (4pt) node[below=3pt, color2] {$M_1$};
    \draw [fill=color2] (17,5) circle (4pt) node[above=3pt, color2] {$C_{\alpha(k)+1}\otimes \cdots \otimes C_n \otimes M_0$};
    \draw [fill=color2] (17,0) circle (4pt) node[below=3pt, color2] {$C_{\alpha(k)+1}\otimes \cdots \otimes C_n \otimes M_1$};
    \draw [fill=color2] (22,0) circle (4pt) node[below=3pt, color2] {$M_2$};
    \draw [fill=color2] (22,5) circle (4pt) node[above=3pt, color2] {$M_3$};
    \end{small}

    \fill[line width=2pt,color=color5,fill=color5,fill opacity=0.1] (26,2.5) -- (28,1.5) -- (29,3) -- (27.5,5) -- cycle;
    \fill[line width=2pt,color=color1,fill=color1,fill opacity=0.6] (24,1) -- (26,0) -- (28,1.5) -- (26,2.5) -- cycle;

    \begin{tiny}
    \draw [->,line width=1pt] (24,1) -- (26,0) node[below left=1pt,  midway] {$\alpha$};
    \draw [->,line width=1pt] (26,0) -- (28,1.5) node[below right=1pt,  midway] {$q(e)_!$};
    \draw [->,line width=1pt] (28,1.5) -- (29,3) node[below=1pt,  midway] {$\beta$};
    \draw [->,line width=1pt] (24,1) -- (26,2.5) node[above left=1pt,  midway] {$q(e)_!$};
    \draw [->,line width=1pt] (26,2.5) -- (28,1.5) node[below left=1pt,  midway] {$q(e)_! \alpha$};
    \draw [->,line width=1pt] (26,2.5) -- (27.5,5) node[above=1pt,  midway] {$\delta$};
    \draw [->,line width=1pt,dash pattern=on 10pt off 10pt] (26,2.5) -- (29,3) node[below=1pt,  midway] {$\gamma$};
    \draw [->,line width=1pt] (28,1.5) -- (27.5,5) node[above right=1pt,  midway] {$\eta$};
    \draw [->,line width=1pt] (29,3) -- (27.5,5) node[above=1pt,  midway] {$\theta$};
    
    \draw [fill=color2] (24,1) circle (4pt) node[left=3pt, color2] {$M_0$};
    \draw [fill=color2] (26,0) circle (4pt) node[below=3pt, color2] {$M_1$};
    \draw [fill=color2] (28,1.5) circle (4pt) node[below right=3pt, color2] {$C_{\alpha(k)+1}\otimes \cdots \otimes C_n \otimes M_1$};
    \draw [fill=color2] (29,3) circle (4pt) node[right=3pt, color2] {$M_2$};
    \draw [fill=color2] (26,2.5) circle (4pt) node[above=3pt, color2] {$C_{\alpha(k)+1}\otimes \cdots \otimes C_n \otimes M_0$};
    \draw [fill=color2] (27.5,5) circle (4pt) node[above=3pt, color2] {$M_3$};
    \end{tiny}
    
\end{tikzpicture}
}

\caption{Some examples of the phenomenon: on the left a simplex in $\mathcal{M}^{\varoast}\times_{\mathcal{C}^{\varoast}}\Delta^1$,  and on the right the corresponding diagram in $\mathcal{M}$. The blue rectangle on the right represents the copy of $\mathcal{M}$ over $0$ (or over $(C_1,\ldots, C_n)$), the green one on the left is the one over $1$ (or over $(D_1,\ldots, D_k)$)}
\label{fig:simplices.in.Delta1.times.M}
\end{figure}
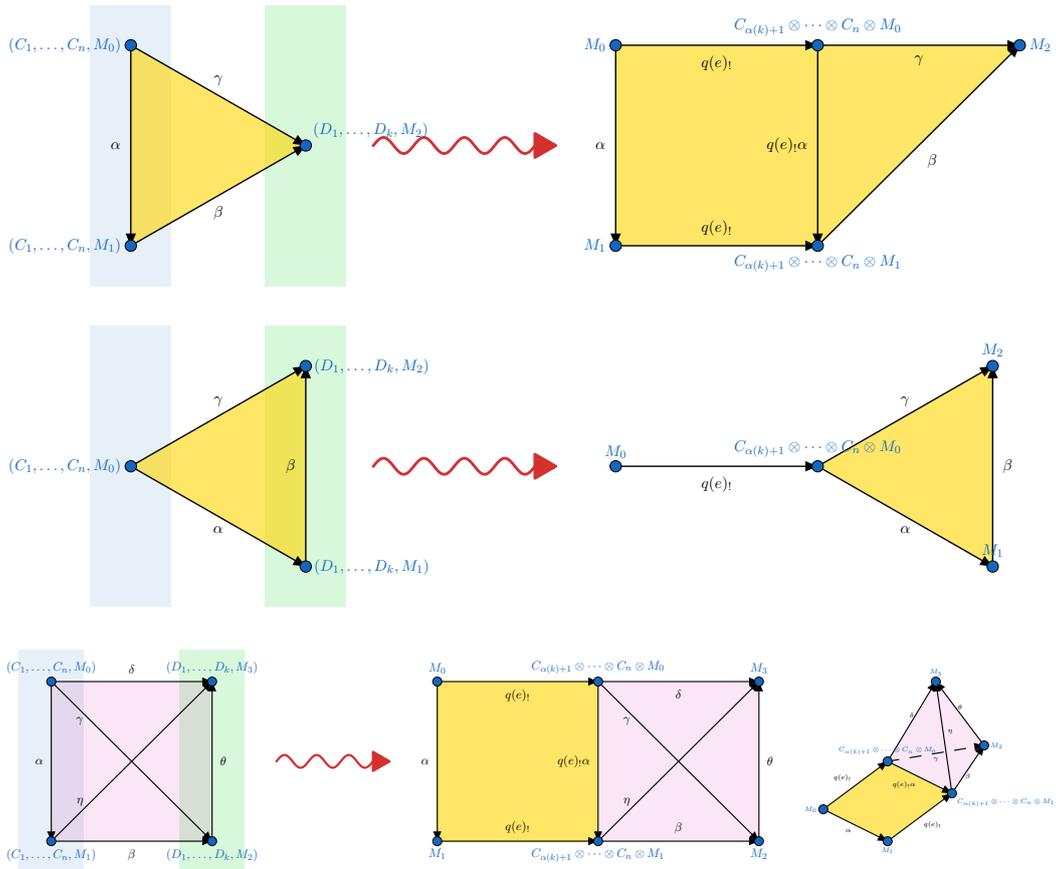

In fact, consider an $n$ simplex $\Delta^n \to \mathcal{M}^{\varoast}\times_{\mathcal{C}^{\varoast}} \Delta^1$; say that $m$ of its vertices (with $0 \leq m \leq n+1$) belong to the copy of $\mathcal{M}$ over $0 \in \Delta^1$. If either $m=0$ or $m=n+1$, then the simplex is just an $n$-simplex of $\mathcal{M}$. For other values of $m$, consider the pushout in simplicial sets:
\[\begin{tikzcd}
    \Delta^{\{0,\ldots,m-1\}} 
        \arrow[r, "i_1"] 
        \arrow[d]
        \arrow[dr, phantom, "\ulcorner", very near end]&
    \Delta^{m-1} \times \Delta^1 
        \arrow[d]\\
    \Delta^n 
        \arrow[r]&
    P_{n,m}
\end{tikzcd}\addtocounter{tikzcomd}{1}\]
In practice, the face $\Delta^{m-1} \times \{1\}$ of $\Delta^{m-1} \times \Delta^1$ is glued to $\Delta^{\{0,\ldots,m-1\}} \subset \Delta^n$. Having an $n$-simplex $\Delta^n \to \mathcal{M}^{\varoast}\times_{\mathcal{C}^{\varoast}} \Delta^1$ as above is then the same as having map $P_{n,m} \to \mathcal{M}$ such that: 
\begin{itemize}
    \item $\Delta^{m-1} \times \{0\} \to \mathcal{M}$ describes the $(m-1)$-simplex over $0 \in \Delta^1$,
    \item $\Delta^{m-1} \times \Delta^1 \to \mathcal{M}$ is the transport map $q(e)_!$ between the two fibres, and
    \item $\Delta^n \to \mathcal{M}$ describes the fibre over $1 \in \Delta^1$: this involves not only the zero-simplices over $1$, but also the images via $q(e)_!$ of the zero-simplices over $0$ and all the (higher) morphisms between them.
\end{itemize}

Hence all simplices in $\mathcal{M}^{\varoast}\times_{\mathcal{C}^{\varoast}} \Delta^1$ are given by certain (specific) diagrams in $\mathcal{M}$; the action of the category $\mathcal{C}^{\varoast}$ is reduced to the transport morphism $q(e)_!$, which is fixed for all of them.

Finally, observe that $m=0$ and $m=n+1$ can be seen as special cases of the general one, with a degenerate push-forward in the first, describing an $n$ simplex in the fibre over 1 only, and a $\Delta^n \times \Delta^1$ in the second, which corresponds to an $n$ simplex in the first fibre and its image via $q(e)_!$ in the second. The notation $P_{n,0}\to \mathcal{M}$ or $P_{n,n}\to \mathcal{M}$ is adopted for these cases.

To prove that $r(e)$ is locally $s$-co-Cartesian, construct any diagram:
\[\begin{tikzcd}
    &\Delta^1 
        \arrow[ldd, bend right=10]
        \arrow[d]
        \arrow[rd, "e"]&&&\\
    & \mathcal{M}^{\varoast}\times_{\mathcal{C}^{\varoast}} \Delta^1 
        \arrow[r]
        \arrow[d, "r'"]&
    \mathcal{M}^{\varoast}
        \arrow[d, "r"]&\\
    \Lambda^n_0
        \arrow[d]
        \arrow[r]&
    \mathcal{M}'^{\varoast} \times_{\mathcal{C}'^{\varoast}} \Delta^1
        \arrow[d, "\Tilde{s}"]
        \arrow[r]&
    \mathcal{M}'^{\varoast} \times_{\mathcal{C}'^{\varoast}}\mathcal{C}^{\varoast}
        \arrow[d, "s"]
        \arrow[r]
        \arrow[ul, phantom, "\lrcorner", very near end]&
    \mathcal{M}'^{\varoast}
        \arrow[d, "q'"]\\
    \Delta^n
        \arrow[r]
        \arrow[ur, dashed]&
    \Delta^1
        \arrow[r]&
    \mathcal{C}^{\varoast}
        \arrow[r, "f^{\varoast}"]
        \arrow[ul, phantom, "\lrcorner", very near end]&
    \mathcal{C}'^{\varoast}
        \arrow[ul, phantom, "\lrcorner", very near end]\\
\end{tikzcd}\addtocounter{tikzcomd}{1}\]
with similar requirements as above. To simplify the notation, the identities:
\begin{gather*}
    \mathcal{M}'^{\varoast} \times_{\mathcal{C}'^{\varoast}} \Delta^1 \simeq (\mathcal{M}'^{\varoast} \times_{\mathcal{C}'^{\varoast}}\mathcal{C}^{\varoast})\times_{\mathcal{C}^{\varoast}} \Delta^1\\
    \mathcal{M}^{\varoast}\times_{\mathcal{C}^{\varoast}} \Delta^1 \simeq \mathcal{M}^{\varoast}\times_{(\mathcal{M}'^{\varoast} \times_{\mathcal{C}'^{\varoast}}\mathcal{C}^{\varoast})} (\mathcal{M}'^{\varoast} \times_{\mathcal{C}'^{\varoast}} \Delta^1)
\end{gather*}
were used.

To find the desired lift, observe that $r'$ is an isomorphism of simplicial sets. In fact, the above conclusions about $\mathcal{M}^{\varoast}\times_{\mathcal{C}^{\varoast}} \Delta^1$ apply to $\mathcal{M}'^{\varoast}\times_{\mathcal{C}'^{\varoast}} \Delta^1$ as well. In particular, all simplices of the latter are described by diagrams $P_{n,m} \to \mathcal{M}'$, with the same requirements as above and the transfer map given by $q'(g(e))_!$. 

As $r'$ respects the distribution of the zero simplices in the fibres, it respects the indices $m$ and $n$. Consider an $n$ simplex $\Delta^n \to \mathcal{M}^{\varoast}\times_{\mathcal{C}^{\varoast}} \Delta^1$ given by a diagram $d: P_{n,m} \to \mathcal{M}$. Its image via $r'$ is an $n$ simplex $\Delta^n \to \mathcal{M}'^{\varoast}\times_{\mathcal{C}^{\varoast}} \Delta^1$ given by a diagram $P_{n,m} \to \mathcal{M}'$. Now, the map $r: \mathcal{M}^{\varoast} \to \mathcal{M}'^{\varoast}$ restricts to $g:\mathcal{M} \to \mathcal{M}'$ on the fibres over 0, and $q(e)_!$ is sent to $q'(g(e))_!$. As $g$ is an isomorphism of simplicial sets, for any pair of fixed $m,n$, this determines a bijection between the sets of such diagrams, hence $r'$ is an isomorphism.

So $\Lambda^n_0 \to \mathcal{M}'^{\varoast}\times_{\mathcal{C}'^{\varoast}} \Delta^1$ lifts to $\Lambda^n_0 \to \mathcal{M}^{\varoast}\times_{\mathcal{C}^{\varoast}} \Delta^1$. Hence there is a map $\Delta^n \to \mathcal{M}^{\varoast}$ making the diagram commute, and this in turn, via the universal property of the pullback, produces the desired $\Delta^n \to \mathcal{M}'^{\varoast}\times_{\mathcal{C}'^{\varoast}} \Delta^1$. So $r(e)$ results locally $s$ co-Cartesian.

Finally, let $Z$ be an object of $\mathcal{C}^{\varoast}$; via the canonical map $\mathcal{C}^{\varoast} \to N(\Delta)^{op}$, one can identify it with an n-tuple of objects of $\mathcal{C}$ \cite[Definition 4.1.3.2]{Lurie2017}; the fibre via $q$ of this n-tuple is a copy of $\mathcal{M}$ \cite[Remark 4.2.2.18]{Lurie2017}. The n-tuple in $\mathcal{C}^{\varoast}$ is sent via $f^{\varoast}$ to an n-tuple of objects of $\mathcal{D}^{\varoast}$; the fibre via $q'$ of this n-tuple is a copy of $\mathcal{M'}$. Hence $(\mathcal{M}'^{\varoast} \times_{\mathcal{C}'^{\varoast}}\mathcal{C}^{\varoast})_Z \simeq \mathcal{M}'$. By hypothesis $g$ is an equivalence $\mathcal{M} \to \mathcal{M'}$, so the third hypothesis is satisfied. Hence $\mathcal{M} \simeq \mathcal{M}'^{\varoast} \times_{\mathcal{C}'^{\varoast}}\mathcal{C}^{\varoast}$ by corollary \ref{cor:lurie.2.4.4.4}.

There is then an equivalence $\mathcal{M}^{\varoast} \times_{\mathcal{C}^{\varoast}} N(\Delta)^{op} \simeq \mathcal{M}'^{\varoast} \times_{\mathcal{C}'^{\varoast}} N(\Delta)^{op}$, where the pullbacks are taken along the maps appearing in diagram \ref{diag:LModA(M)}. This produces an equivalence:
\begin{multline}\label{eq:Fun=Fun.for.LModA}
    Fun_{N(\Delta)^{op}}(N(\Delta)^{op},\mathcal{M}^{\varoast} \times_{\mathcal{C}^{\varoast}} N(\Delta)^{op})  \\
    \simeq Fun_{N(\Delta)^{op}}(N(\Delta)^{op}, \mathcal{M}'^{\varoast} \times_{\mathcal{C}'^{\varoast}} N(\Delta)^{op})
\end{multline}

Now, the category $LMod^{\mathbb{A}_{\infty}}_A(\mathcal{M})$ is given \cite[Definition 4.2.2.10 and Remark 4.2.2.19]{Lurie2017} by the full subcategory on those functors that correspond to left modules in $Fun_{N(\Delta)^{op}}(N(\Delta)^{op},\mathcal{M}^{\varoast})$ (similarly for and $LMod^{\mathbb{A}_{\infty}}_{F(A)}(\mathcal{M}')$). This can be tested on two conditions. 
\begin{defi} \label{def:LMod(M)}
    Let $X \in Fun_{N(\Delta)^{op}}(N(\Delta)^{op},\mathcal{M}^{\varoast})$. Then $X \in LMod^{\mathbb{A}_{\infty}}(\mathcal{M})$ if:
\begin{enumerate}
    \item The functor induced by $X$ and by $q$ in $Fun_{N(\Delta)^{op}}(N(\Delta)^{op},\mathcal{C}^{\varoast})$ is in $Alg_{\mathbb{A}_{\infty}}(\mathcal{C}^{\varoast})$.
    \item Given $\alpha : [m] \to [n]$ an inert morphism in $\Delta$, such that $\alpha(m)=n$, then the induced map $X([n]) \to X([m])$ is a $p \circ q$-co-Cartesian morphism in $\mathcal{M}^{\varoast}$.
\end{enumerate}
\end{defi}

Now, if the arrow $X$ comes from an arrow $Y: N(\Delta)^{op} \to \mathcal{M}^{\varoast} \times_{\mathcal{C}^{\varoast}} N(\Delta)^{op}$, the first condition is clearly true, with the algebra given by $A$. Expanding the second condition, one gets the lifting problem:
\begin{equation}\label{diag:lifting.A}
\begin{tikzcd}
    & \Delta^{\{0,1\}}
        \arrow[rd, "\alpha"]
        \arrow[ldd]&\\
    &&N(\Delta)^{op} 
        \arrow[ld, swap, "Y"] 
        \arrow[d, "X"]\\
    \Lambda^n_0
        \arrow[dd]
        \arrow[rr, bend left=20]&
    \mathcal{M}^{\varoast} \times_{\mathcal{C}^{\varoast}} N(\Delta)^{op} 
        \arrow[r, "i"] 
        \arrow[d, "j"] & 
    \mathcal{M}^{\varoast}
        \arrow[d, "q"]\\
    &N(\Delta)^{op} 
        \arrow[rd, swap,"id"] 
        \arrow[r, "A"]  &
    \mathcal{C}^{\varoast} 
        \arrow[d, "p"]
        \arrow[ul, phantom, "\lrcorner", very near end]\\
    \Delta^n 
        \arrow[rr]
        \arrow[rruu, dashed, gray, bend right=10]&
    & N(\Delta)^{op}
\end{tikzcd}\addtocounter{tikzcomd}{1}
\end{equation}

This is equivalent to the lifting problem:
\begin{equation}\label{diag:lifting.B}
\begin{tikzcd}
    & \Delta^{\{0,1\}}
        \arrow[rd, "\alpha"]
        \arrow[ldd]&\\
    &&N(\Delta)^{op} 
        \arrow[ld, swap, "Y"] 
        \arrow[d, "X"]\\
    \Lambda^n_0
        \arrow[d]
        \arrow[r]&
    \mathcal{M}^{\varoast} \times_{\mathcal{C}^{\varoast}} N(\Delta)^{op} 
        \arrow[r, "i"] 
        \arrow[d, "j"] & 
    \mathcal{M}^{\varoast}
        \arrow[d, "q"]\\
    \Delta^n 
        \arrow[r]
        \arrow[ru, dashed, gray]&
    N(\Delta)^{op} 
        \arrow[rd, swap,"id"] 
        \arrow[r, "A"]  &
    \mathcal{C}^{\varoast} 
        \arrow[d, "p"]
        \arrow[ul, phantom, "\lrcorner", very near end]\\
    && N(\Delta)^{op}
\end{tikzcd}\addtocounter{tikzcomd}{1}
\end{equation}
In fact, first notice that the map $\Delta^n \to N(\Delta)^{op}$ can equivalently have the image in either of the $N(\Delta)^{op}$ appearing in the two lowest rows of the diagram, as they are connected by the identity morphism. 

Suppose now that one can solve all lifting problems of the form \ref{diag:lifting.A} and is presented with a lifting problem like \ref{diag:lifting.B}. Then, by composition, an arrow $\Lambda_0^n \to \mathcal{M}^{\varoast}$ is obtained, which leads to a lifting problem of the form \ref{diag:lifting.A}. This can be solved by assumption, so one gets a lift $\Delta^n \to \mathcal{M}^{\varoast}$. The lift $\Delta^n \to \mathcal{M}^{\varoast} \times_{\mathcal{C}^{\varoast}}N(\Delta)^{op}$ is finally given by the universal property of the pullback.

Next, suppose that one can solve all lifting problems of the form \ref{diag:lifting.B} and is presented with a diagram \ref{diag:lifting.A}. One first passes to a diagram of the form \ref{diag:lifting.B} by pulling back the map $\Lambda^n_0 \to \mathcal{M}^{\varoast}$ along the composition $\Lambda^n_0 \to \Delta^n \to N(\Delta)^{op}$, to get a map $\Lambda^n_0 \to \mathcal{M}^{\varoast} \times_{\mathcal{C}^{\varoast}}N(\Delta)^{op}$. This lifting problem can be solved by assumption, so one gets a map $\Delta^n \to \mathcal{M}^{\varoast} \times_{\mathcal{C}^{\varoast}}N(\Delta)^{op}$; one finally gets the desired map $\Delta^n \to \mathcal{M}^{\varoast}$ by composition.

In case $X= i \circ Y$ comes from a module over a specified algebra $A$, it is then possible to reformulate condition $2.$ in \ref{def:LMod(M)} as:
\begin{itemize}
    \item [2'.] Given $\alpha : [m] \to [n]$ an inert morphism in $\Delta$, such that $\alpha(m)=n$, then the induced map $Y([n]) \to Y([m])$ is a $j$-co-Cartesian morphism in $\mathcal{M}^{\varoast}\times_{\mathcal{C}^{\varoast}}N(\Delta)^{op}$.
\end{itemize}
Given that the equivalence $\mathcal{M}^{\varoast}\times_{\mathcal{C}^{\varoast}}N(\Delta)^{op} \simeq \mathcal{M}'^{\varoast}\times_{\mathcal{C}'^{\varoast}}N(\Delta)^{op}$ comes equipped, by construction, with a commutative diagram:
\[\begin{tikzcd}
    \mathcal{M}^{\varoast}\times_{\mathcal{C}^{\varoast}}N(\Delta)^{op} 
        \arrow[rr, "\sim"]
        \arrow[rd, swap, "j"]&&
    \mathcal{M}'^{\varoast}\times_{\mathcal{C}'^{\varoast}}N(\Delta)^{op}
        \arrow[ld, "j'"]\\
    & N(\Delta)^{op}&
\end{tikzcd}\addtocounter{tikzcomd}{1}\]
where $j$ and $j'$ are the maps induced by the pullbacks, it is clear that an arrow in $\mathcal{M}^{\varoast}\times_{\mathcal{C}^{\varoast}}N(\Delta)^{op}$ is $j$-co-Cartesian if and only if the corresponding arrow is $j'$-co-Cartesian. Hence the equivalence \ref{eq:Fun=Fun.for.LModA} restricts to an equivalence:
\[
    LMod_A(\mathcal{M}^{\varoast}) \simeq LMod_{F(A)}(\mathcal{M}'^{\varoast})
\]
\end{proof}

\clearpage

\section{The enriched adjunction}\label{sec:enr.adj}

In this section, the results of the previous part are applied to operads and monads, providing a link between operads enriched in a certain category and monads over the same category.

Given $\mathcal{C}$ any $\infty$-category, let
\[
    sSeq(\mathcal{C})=Fun \left(\coprod_{n\geq 0} B(\Sigma_n), \mathcal{C}\right)
\]
be the category of symmetric sequences in $\mathcal{C}$ \cite[Notation 8.1]{Heine2023a}. In the event $\mathcal{C}$ is endowed with a symmetric monoidal structure compatible with small colimits, one can define a monoidal structure on symmetric sequences as well, called the composition product. Given $X,\,Y \in sSeq(\mathcal{C})$, their composition product is described level-wise by the following formula \cite[Proposition 8.2]{Heine2023a}:
\[
    (X \circ Y)(n) \cong \coprod_{k \geq 0} \left ( \coprod_{n_1 + \ldots + n_k =n} \Sigma_n \times_{(\Sigma_{n_1} \times \ldots \times \Sigma_{n_k})}(X_k \otimes (\bigotimes_{1 \leq j \leq k} Y_{n_j}))\right)_{\Sigma_k}
\]
Observe in particular that if $Y$ is concentrated in degree zero, so is $X \circ Y$, for any symmetric sequence $X$. Identifying $\mathcal{C}$ with the symmetric sequences concentrated in degree zero provides then a left action of $sSeq(\mathcal{C})$ on $\mathcal{C}$.

\begin{defi}
    A single coloured operad $O$ enriched in $\mathcal{C}^{\otimes}$ is an associative algebra in $sSeq(\mathcal{C})$. The category of $O$-algebras $Alg_O(\mathcal{C})$ in $\mathcal{C}$ is defined as the category of left $O$-modules with respect to the action of $sSeq(\mathcal{C})$ on $\mathcal{C}$: $Alg_O(\mathcal{C})=LMod_O(\mathcal{C})$.
\end{defi} 

This approach to $\infty$-operads is very similar to the original, classical one by Kriz and May \cite{KriMay1995} and represents one of the most common ways to deal with enriched $\infty$-operads. For a more thorough study of the construction,  we refer to \cite{Haugseng2019}, where one can also find a multicoloured version.
This definition of $\infty$-operads via symmetric sequences has also been confronted with the other formulations; 
for instance, the construction of $\infty$-operads in the category of spaces is compared with the more common definition of $\infty$-operads by Lurie \cite[Definition 2.1.1.10]{Lurie2017} in \cite[Remark 5.15]{Haugseng2019}, proving a partial equivalence between the two constructions.

Next, we associate to an $\infty$-operad in $\mathcal{C}^{\otimes}$ a monad on $\mathcal{C}$. In fact, one can always interpret the action of a symmetric sequence in $\mathcal{C}$ as an endofunctor $\mathcal{C} \to \mathcal{C}$, which promotes to a monoidal functor from the category $sSeq(\mathcal{C})^{\otimes}$ of symmetric sequences in $\mathcal{C}$ under composition product to the category $Fun(\mathcal{C},\mathcal{C})^{\otimes}$ of endofunctors of $\mathcal{C}$ under composition. 

A monad is an associative algebra object $T \in Alg_{Assoc}Fun(\mathcal{C},\mathcal{C})$ with respect to the composition monoidal structure \cite[Definition 4.7.0.1]{Lurie2017}. The monoidal functor from symmetric sequences to endofunctors introduced above produces hence a map:
\[
    Alg_{Assoc}(sSeq(\mathcal{C})) \to Alg_{Assoc}(End_{\mathcal{C}}(\mathcal{C}))
\]
sending a $\mathcal{C}$-enriched operad $O$ to the corresponding 
monad ${T_O}$.

As symmetric sequences and endofunctors act on the left on $\mathcal{C}$, compatibly with the monoidal functor between them, we can apply the results of the previous section: by theorem \ref{thm:LModA=LModf(A)}, 
\begin{equation}\label{eq:algO(C)=LMod{T_O}(C)}
Alg_O(\mathcal{C})\simeq LMod_{T_O}(\mathcal{C}).    
\end{equation}

Now, it is in fact possible to put additional structure on monads and the associated module categories. Monads can in fact be defined in any $(\infty,2)$-category:

\begin{defi}{\cite[Definition 4.1]{Heine2023a}}
    Let $\mathcal{D}^{\circledast} \to Cat^{\times}_{\infty}$ be an $(\infty,2)$-category, and let $X$ be an object of $\mathcal{D}$. A monad on $X$ in $\mathcal{D}$ is an associative algebra in the endomorphism monoidal structure on $Mor_{\mathcal{D}}(X,X)$.
\end{defi}

The previous definition of monad corresponds to the choice $\mathcal{D}=Cat_{\infty}$ and is considered the non-enriched case.
In the enriched case the category of left modules over a monad $T$ on an object $X \in \mathcal{D}$ is formally substituted with that of Eilenberg-Moore object \cite[Definition 5.2.]{Heine2023a}, encoded as a functor $Y \to X$ in $\mathcal{D}$ satisfying certain universal properties. Observe that in the non-enriched case, this functor $Y \to X$ would simply be the forgetful functor $LMod_T(X) \to X$ in $Cat_{\infty}$. This fact is somewhat reflected by the functoriality of Eilenberg-Moore objects: \cite[Corollary 5.16]{Heine2023a} states that 2-functors that admit 2-left adjoints preserve Eilenberg-Moore objects.

Eilenberg-Moore objects do not always exist but appear quite often in nature. For instance if one deals with $\mathcal{D}$ presentably left tensored over $Cat_{\infty}$ (for example $\mathcal{D}=Cat_{\infty}^{\mathcal{V}}$ the $\mathcal{V}$-enriched $\infty$ categories, with $\mathcal{V}^{\otimes}$ presentably symmetric monoidal), one has all Eilenberg-Moore objects  \cite[Corollary 5.28]{Heine2023a}. However, in our case, the following holds:

\begin{prop}\label{prop:opd.to.monad}
    Let $\mathcal{C}$ be a symmetric monoidal $\infty$-category; suppose that $\mathcal{C}$ is enriched over another $\infty$-category $\mathcal{V}^{\otimes}$, which carries the cartesian monoidal structure. Consider any $\mathcal{C}$-enriched infinity operad $O \in Alg_{Assoc}(sSeq(\mathcal{C})$; then the associated monad ${T_O}$ promotes to a lax $\mathcal{V}^{\otimes}$-linear monad on $\mathcal{C}$.
\end{prop}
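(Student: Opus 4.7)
The strategy is to refine the monoidal functor $sSeq(\mathcal{C})^{\otimes} \to Fun(\mathcal{C}, \mathcal{C})^{\otimes}$ constructed above so that it factors through the monoidal $\infty$-subcategory of lax $\mathcal{V}$-linear endofunctors of $\mathcal{C}$. Applying $Alg_{Assoc}(-)$ to this refinement then sends $O$ to a lax $\mathcal{V}$-linear associative algebra in endofunctors, which is precisely $T_O$ equipped with the desired extra structure.

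The key input is the cartesian structure on $\mathcal{V}^{\otimes}$: for every $V \in \mathcal{V}$ and every $n \geq 0$ there is a canonical diagonal $V \to V^{\times n}$, and, since the unit coincides with the terminal object, a canonical projection $V \to 1_{\mathcal{V}}$. First I would use these to equip each basic endofunctor $C \mapsto X \otimes C^{\otimes n}$ (for $X \in \mathcal{C}$ and $n \geq 0$) with a lax $\mathcal{V}$-linear structure given by the composite
\[
V \otimes (X \otimes C^{\otimes n}) \longrightarrow X \otimes (V \otimes C)^{\otimes n},
\]
obtained by applying the diagonal $V \to V^{\times(n+1)}$, projecting the first factor to $1_{\mathcal{V}}$ (so that its action on $X$ is trivial), and distributing the remaining $n$ copies of $V$ across the tensor factors via the compatibility of the $\mathcal{V}$-action on $\mathcal{C}$ with the monoidal product of $\mathcal{C}$. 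Cocommutativity and coassociativity of the cartesian diagonals supply the $\Sigma_n$-equivariance of these structure maps and their compatibility with the iterated application of symmetric sequences.

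I would then verify that the resulting lax $\mathcal{V}$-linear structures are compatible with coproducts (by the universal property of the coproduct) and with the composition product on symmetric sequences, assembling them into a monoidal refinement of $sSeq(\mathcal{C})^{\otimes} \to Fun(\mathcal{C}, \mathcal{C})^{\otimes}$. Passing to associative algebras and invoking the identification $Alg_O(\mathcal{C}) \simeq LMod_{T_O}(\mathcal{C})$ from the previous section then exhibits $T_O$ as a lax $\mathcal{V}$-linear monad on $\mathcal{C}$.

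The main obstacle is upgrading the pointwise construction above to an honest monoidal map of $\infty$-categories. The cleanest route I envisage is to first endow $sSeq(\mathcal{C})$ itself with a $\mathcal{V}$-action inherited pointwise from that on $\mathcal{C}$ through the functor-category presentation $sSeq(\mathcal{C}) = Fun(\coprod_n B\Sigma_n, \mathcal{C})$, next verify that the composition product is lax $\mathcal{V}$-linear in both arguments (the cartesian diagonal providing the duplication of the action variable forced by the formula for $X \circ Y$), and finally show that the evaluation pairing $sSeq(\mathcal{C}) \otimes \mathcal{C} \to \mathcal{C}$ is itself lax $\mathcal{V}$-linear monoidal. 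Once this enriched picture is in place, the identification $T_O \simeq O \circ (-)$ for the associative algebra $O \in sSeq(\mathcal{C})$ immediately promotes $T_O$ to a lax $\mathcal{V}$-linear monad, as claimed.
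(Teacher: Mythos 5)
A preliminary remark: the paper does not actually prove this proposition --- its proof reads ``To appear in \cite{Heine2023a}'', so the statement is imported as a black box and there is no argument in the text to compare yours against. What can be assessed is whether your plan would stand on its own. You have correctly identified the mechanism, which is surely why the cartesian hypothesis on $\mathcal{V}^{\otimes}$ appears in the statement: the diagonal $V \to V^{\times (n+1)}$ together with the projection $V \to \mathbf{1}_{\mathcal{V}}$ is exactly what produces candidate lax structure maps $V \otimes \bigl(O(n) \otimes_{\Sigma_n} C^{\otimes n}\bigr) \to O(n) \otimes_{\Sigma_n} (V \otimes C)^{\otimes n}$.

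As a proof, however, the proposal has a genuine gap precisely at the point you label the ``main obstacle''. In the $\infty$-categorical setting the entire content of the proposition is the coherent construction: a lax $\mathcal{V}^{\otimes}$-linear monad is not a monad together with a family of pointwise maps satisfying properties up to homotopy, but an associative algebra in an endomorphism object formed inside Heine's $(\infty,2)$-category of weakly left $\mathcal{V}^{\otimes}$-tensored categories. Producing it requires lifting the monoidal functor $sSeq(\mathcal{C})^{\otimes} \to Fun(\mathcal{C},\mathcal{C})^{\otimes}$ to a monoidal functor into lax $\mathcal{V}$-linear endofunctors, either by an explicit construction at the level of the defining (co)Cartesian fibrations or via a universal property; writing down the candidate maps and appealing to cocommutativity of the diagonal does not manufacture that lift, and your sketch supplies neither route. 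Two further points need repair. First, ``enriched over $\mathcal{V}^{\otimes}$'' only provides morphism objects, whereas your structure maps presuppose a left $\mathcal{V}$-tensoring of $\mathcal{C}$ compatible with its symmetric monoidal structure; that datum should be made an explicit hypothesis (it is what the paper uses elsewhere, via the functor $\phi$). Second, compatibility with the coproducts, the $\Sigma_k$-coinvariants, and the $\Sigma_n$-quotients in the composition-product formula requires a map from $V \otimes \colim(-)$ to $\colim(V \otimes -)$, which is the \emph{wrong} direction to obtain from the universal property of the coproduct alone: you need $V \otimes (-)$ to commute with these colimits, which holds under the section's standing compatibility-with-colimits assumptions but must be invoked as such rather than derived formally.
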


\begin{proof}
    To appear in \cite{Heine2023a}.
\end{proof}

\begin{cor}\label{cor:modules.are.enriched}
    There is an Eilenberg-Moore object for the lax $\mathcal{V}^{\otimes}$-linear monad monad ${T_O}$; in other words, the $\infty$-category of left modules $LMod_{T_O}(\mathcal{C})$ can be promoted to a weakly left $\mathcal{V}^{\otimes}$-tensored $\infty$-category.
\end{cor}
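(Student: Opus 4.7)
The plan is to apply a general Eilenberg-Moore existence theorem to the lax $\mathcal{V}^{\otimes}$-linear monad $T_O$ furnished by Proposition \ref{prop:opd.to.monad}. Such a lax linear monad is, by definition, a monad on $\mathcal{C}$ viewed as an object of an $(\infty,2)$-category $\mathcal{D}$ whose objects are weakly left $\mathcal{V}^{\otimes}$-tensored $\infty$-categories and whose $1$-morphisms are lax $\mathcal{V}^{\otimes}$-linear functors. The corollary then reduces to two claims: $\mathcal{D}$ admits an Eilenberg-Moore object for $T_O$, and the underlying $\infty$-category of this object is $LMod_{T_O}(\mathcal{C})$.

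For the existence claim I would appeal to the analogue of \cite[Corollary 5.28]{Heine2023a}, which produces Eilenberg-Moore objects in any $(\infty,2)$-category presentably left tensored over $Cat_{\infty}$. The cartesian monoidal assumption on $\mathcal{V}^{\otimes}$, together with the framework recalled in the paragraph preceding Proposition \ref{prop:opd.to.monad}, places $\mathcal{D}$ in this setting and produces a weakly left $\mathcal{V}^{\otimes}$-tensored $\infty$-category $Y$ equipped with a lax $\mathcal{V}$-linear forgetful morphism $Y \to \mathcal{C}$ satisfying the relevant universal property. To identify the underlying $\infty$-category of $Y$ with $LMod_{T_O}(\mathcal{C})$, I would invoke \cite[Corollary 5.16]{Heine2023a}: the forgetful 2-functor $\mathcal{D} \to Cat_{\infty}$ admits a 2-left adjoint, given by equipping a plain $\infty$-category with its free weakly $\mathcal{V}^{\otimes}$-tensored structure, and therefore preserves Eilenberg-Moore objects. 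Combined with the identification \ref{eq:algO(C)=LMod{T_O}(C)}, this forces the underlying $\infty$-category of $Y$ to coincide with $LMod_{T_O}(\mathcal{C})$, which thereby inherits the claimed weakly left $\mathcal{V}^{\otimes}$-tensored structure.

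The main obstacle I anticipate is verifying that the $(\infty,2)$-category $\mathcal{D}$ satisfies the presentability hypothesis required by \cite[Corollary 5.28]{Heine2023a} in the lax, weakly tensored setting (rather than in the strictly enriched setting $Cat_{\infty}^{\mathcal{V}}$ explicitly treated in \emph{loc. cit.}), and likewise that the forgetful 2-functor $\mathcal{D} \to Cat_{\infty}$ is genuinely 2-left adjointable. Both statements are natural variants of results contained in Heine's framework and are implicitly used above, but warrant explicit citation or a short verification. Once these structural facts are secured, the corollary is a purely formal consequence of the two Heine results quoted above and Proposition \ref{prop:opd.to.monad}.
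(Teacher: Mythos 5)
The paper's own proof is a one-line citation: the statement is exactly \cite[Corollary 5.36]{Heine2023a}, which asserts directly that a lax $\mathcal{V}^{\otimes}$-linear monad admits an Eilenberg--Moore object whose underlying $\infty$-category is the usual category of left modules. Your proposal instead tries to reassemble this from \cite[Corollary 5.28]{Heine2023a} (existence of Eilenberg--Moore objects in $(\infty,2)$-categories presentably left tensored over $Cat_{\infty}$) plus \cite[Corollary 5.16]{Heine2023a} (preservation under 2-functors with 2-left adjoints), and this is where the gap lies. The relevant $(\infty,2)$-category here has weakly left $\mathcal{V}^{\otimes}$-tensored $\infty$-categories as objects and \emph{lax} $\mathcal{V}^{\otimes}$-linear functors as morphisms, and there is no established result placing this $(\infty,2)$-category in the presentably-left-tensored setting required by Corollary 5.28; lax morphisms are precisely the kind of structure that tends to destroy the colimit-theoretic hypotheses that result needs. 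The paper itself signals this: after recalling Corollary 5.28 for the strictly enriched case $Cat_{\infty}^{\mathcal{V}}$, it writes ``However, in our case, the following holds,'' indicating that the lax setting is \emph{not} covered by that existence theorem and that a separate argument (Heine's Corollary 5.36) is what actually delivers the Eilenberg--Moore object. You flag this obstacle yourself, but treating it as a routine verification understates it --- it is the entire content of the step, and the ``analogue'' you invoke is not known to hold.

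The second half of your argument has a similar issue: the claim that the forgetful 2-functor to $Cat_{\infty}$ admits a 2-left adjoint ``given by equipping a plain $\infty$-category with its free weakly $\mathcal{V}^{\otimes}$-tensored structure'' is asserted rather than justified, and in the lax setting the existence of such a 2-adjunction is not automatic. The honest repair is simply to cite \cite[Corollary 5.36]{Heine2023a}, which is tailored to lax linear monads and already identifies the underlying $\infty$-category of the Eilenberg--Moore object with $LMod_{T_O}(\mathcal{C})$, making both of your intermediate steps unnecessary.
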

\begin{proof}
    This is the content of \cite[Corollary 5.36]{Heine2023a}.
\end{proof}

From now on, we will always promote the usual notation for the non-enriched context to the enriched one. Namely, we will refer to the lax $\mathcal{V}^{\otimes}$-linear monad as ${T_O}$ and to the weakly $\mathcal{V}^{\otimes}$ enriched category of left modules as $LMod_{T_O}(\mathcal{C})$. Moreover, we will refer to this weakly enriched version of $LMod_{T_O}(\mathcal{C})$ also as $Alg_O(\mathcal{C})$, whenever we want to stress that the monad comes from an operad, given the equivalence \ref{eq:algO(C)=LMod{T_O}(C)} in the non enriched context. These choices are coherent with the fact that, after forgetting the enrichment, we obtain the usual (spaces-enriched) categories.

We proceed now to the tensoring result. 

\begin{hypothesis}\label{hyp:V}
    Let $\mathcal{V}$ be an $\infty$-category that admits finite products and small colimits, such that the product preserves small colimits independently in each variable. We view $\mathcal{V}$ as a symmetric monoidal $\infty$-category $\mathcal{V}^{\otimes}$ via cartesian product. 
\end{hypothesis}

\begin{rmk}\label{rmk:tensor.1.is.final}
    Since the monoidal structure in $\mathcal{V}$ is given by cartesian product, tensor units and terminal objects coincide. We indicate any of them with the symbol $\mathbf{1}_{\mathcal{V}}$.
\end{rmk}

\begin{hypothesis} \label{hyp:C}
    Let $\mathcal{C}^{\otimes}$ be a symmetric monoidal $\infty$-category compatible with small colimits and let $\phi: \mathcal{V}^{\otimes} \to \mathcal{C}^{\otimes}$ be a symmetric monoidal functor preserving small colimits.
\end{hypothesis}
In this setting, the induced left $\mathcal{V}^{\otimes}$-action on ${\mathcal{C}}$ is tensored, in other words, there is a well-defined tensor functor:
\[
    (-) \otimes (-): \mathcal{V} \times \mathcal{C} \xrightarrow{\phi \times id} {\mathcal{C}} \times {\mathcal{C}} \to {\mathcal{C}}
\]
Recall that the action is $\mathcal{V}$-enriched if, for any $X \in \mathcal{C}$, there is a right adjoint:
\[
    Mor_{\mathcal{C}} (X,-): \mathcal{C} \to \mathcal{V}
\]
to the functor:
\[
    (-) \otimes X : \mathcal{V} \to \mathcal{C}.
\]
We call $Mor_{\mathcal{C}} (-,-): \mathcal{C}\times \mathcal{C} \to \mathcal{V}$ a  morphism object for $\mathcal{C}$ in $\mathcal{V}$ \cite[Definition 3.16]{Heine2023b}. 

\begin{prop}\label{prop:tensor.algebras}
    In the context of hypothesis \ref{hyp:V} and \ref{hyp:C}, assume that the $\mathcal{V}$-action on $\mathcal{C}$ is also enriched. Let $O^{\otimes}$ be an $\infty$-operad in $\mathcal{C}$. Then the $\infty$-category $Alg_{O}({\mathcal{C}})$ is left tensored and enriched over $\mathcal{V}$.
\end{prop}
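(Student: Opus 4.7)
The plan is to transfer the $\mathcal{V}$-structure from $\mathcal{C}$ to $Alg_O(\mathcal{C})$ via the identification $Alg_O(\mathcal{C}) \simeq LMod_{T_O}(\mathcal{C})$ of (\ref{eq:algO(C)=LMod{T_O}(C)}) and the lax $\mathcal{V}^{\otimes}$-linearity of the monad $T_O$ supplied by proposition \ref{prop:opd.to.monad}. Corollary \ref{cor:modules.are.enriched} already furnishes a weakly left $\mathcal{V}^{\otimes}$-tensored structure on $LMod_{T_O}(\mathcal{C})$; what must be added is the upgrade from weakly tensored to genuinely tensored, together with the construction of $\mathcal{V}$-valued morphism objects.

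First, I would establish the tensoring: for $V \in \mathcal{V}$ and a $T_O$-algebra $(A, \alpha \colon T_O(A) \to A)$, the underlying object $V \otimes A \in \mathcal{C}$ should carry the $T_O$-algebra structure obtained by composing the lax $\mathcal{V}^{\otimes}$-linearity comparison map between $T_O(V \otimes A)$ and $V \otimes T_O(A)$ with $V \otimes \alpha$. The coherences packaged into the lax $\mathcal{V}^{\otimes}$-linear monad structure of $T_O$ then promote this pointwise construction to a genuine tensor functor $\mathcal{V} \times Alg_O(\mathcal{C}) \to Alg_O(\mathcal{C})$ covering $\mathcal{V} \times \mathcal{C} \to \mathcal{C}$ along the forgetful functor.

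For the enrichment, I would transport the morphism objects $Mor_{\mathcal{C}}(-,-) \in \mathcal{V}$ to $Alg_O(\mathcal{C})$ by selecting the $T_O$-equivariant part; heuristically, $Mor_{Alg_O(\mathcal{C})}(A,B)$ should arise as a suitable limit in $\mathcal{V}$ of the cosimplicial diagram $Mor_{\mathcal{C}}(T_O^{\bullet}(A), B)$ built from the bar resolution of $A$. Since such limits are not automatic under hypotheses \ref{hyp:V}--\ref{hyp:C} alone, the pragmatic route is to invoke the transfer-of-enrichment results from \cite{Heine2023a}, which establish both tensoring and enrichment on $LMod_T(\mathcal{C})$ whenever $T$ is a lax $\mathcal{V}$-linear monad on an appropriately $\mathcal{V}$-structured $\mathcal{C}$; together with theorem \ref{thm:LModA=LModf(A)}, these then apply verbatim to $Alg_O(\mathcal{C})$.

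The main obstacle is the abstract $(\infty,2)$-categorical bookkeeping required to upgrade the pointwise constructions above into genuinely coherent structures on the module category. This is precisely where proposition \ref{prop:opd.to.monad} becomes indispensable: the lax $\mathcal{V}^{\otimes}$-linear structure on $T_O$ it provides feeds into Heine's Eilenberg--Moore machinery, and the universal property of the Eilenberg--Moore object then forces the desired enriched and tensored structure on $LMod_{T_O}(\mathcal{C})$ without the need to verify adjunctions by hand or to assume presentability of $\mathcal{C}$.
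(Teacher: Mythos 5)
Your proposal follows essentially the same route as the paper: pass to the lax $\mathcal{V}^{\otimes}$-linear monad $T_O$ via proposition \ref{prop:opd.to.monad}, identify $Alg_O(\mathcal{C})$ with $LMod_{T_O}(\mathcal{C})$, and invoke Heine's transfer results (the paper cites \cite[Proposition 5.41]{Heine2023a} specifically) to upgrade the weakly tensored structure to a genuine tensoring and enrichment. Your intermediate heuristics about the pointwise module structure and the cosimplicial morphism objects are consistent with what that machinery produces, so the argument is correct and matches the paper's.
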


\begin{proof}
    We saw that via proposition \ref{prop:opd.to.monad} we can associate to the operad $O$ in $\mathcal{C}$ a lax $\mathcal{V}$-linear monad ${T_O}$, so that the category $Alg_{O}(\mathcal{C})=LMod_{T_O}(\mathcal{C})$ results weakly left tensored.
    
    We can then apply \cite[Proposition 5.41]{Heine2023a}; all the hypothesis of points 1,2,3 are verified, so we conclude that $LMod_{T_O}(\mathcal{C})\cong Alg_O(\mathcal{C})$ is left tensored and enriched over $\mathcal{V}$.

\end{proof}

\begin{rmk}\label{rmk:the.first.adjoint}
     As, for all $C \in Alg_{O}({\mathcal{C}})$, the functor:
        \[
            (-) \otimes C: \mathcal{V} \to Alg_{O}({\mathcal{C}})
        \]
        admits a right adjoint, the internal morphism object:
        \[
            Mor(C,-): Alg_{O}({\mathcal{C}}) \to \mathcal{V},
        \]
        the tensor product preserves colimits in the first variable by \cite[Proposition 5.2.3.5.]{Lurie2009}. Observe in particular that this does not require any presentability assumption on the categories involved.
    \end{rmk}

We wish to show a similar result for cotensors as well: we would like to prove that, under suitable assumptions, if the base category $\mathcal{C}$ admits cotensors for the action of $\mathcal{V}$, then also the algebra category $Alg_O(\mathcal{C}$ does. In particular, we begin by showing that, in an appropriate context (see proposition \ref{prop:tensoring.preserves.colimits}), given $V \in \mathcal{V}$, the functor
\[
    V \otimes (-): LMod_T(\mathcal{C}) \to LMod_T(\mathcal{C})
\]
preserves all small colimits. Observe incidentally that \cite[Theorem 7.3]{Heine2023a} already deals with geometric realisations. Hence, we start by proving that a functor between suitable $\infty$-categories preserves small colimits if and only if it preserves small coproducts and geometric realisations.

\begin{prop}
   Let $\mathcal{A}$ be an $\infty$-category. Then $\mathcal{A}$ is closed under small colimits if and only if it is closed under small sifted colimits and finite coproducts.
\end{prop}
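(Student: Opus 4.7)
The plan is straightforward in one direction and a standard decomposition in the other. The forward implication is immediate, since sifted colimits and finite coproducts are particular small colimits.

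For the converse, the strategy is to build all small colimits from sifted colimits and finite coproducts via two independent steps. First, I would establish that $\mathcal{A}$ admits all small coproducts. Given a small family $\{X_i\}_{i\in I}$, consider the filtered poset $\mathrm{Fin}(I)$ of finite subsets of $I$ ordered by inclusion. The assignment $J\mapsto\coprod_{j\in J}X_j$ defines a filtered diagram in $\mathcal{A}$, with transition maps for $J\subseteq J'$ given by the canonical coproduct inclusions supplied by finite coproducts. Its filtered (hence sifted) colimit computes the desired $\coprod_{i\in I}X_i$.

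Second, I would show that $\mathcal{A}$ admits pushouts. Given a span $X\leftarrow Z\to Y$, the plan is to exhibit the pushout $X\sqcup_Z Y$ as a geometric realisation of a bar-type simplicial object $B_\bullet$ with $B_n=X\sqcup Z^{\sqcup n}\sqcup Y$, whose face maps are assembled from the two legs of the span together with identity and fold maps on $Z$, and whose degeneracies are the evident inclusions. Each $B_n$ is available by finite coproducts, and $|B_\bullet|$ is a colimit indexed by $\Delta^{op}$, hence sifted. Equipped with small coproducts and pushouts, $\mathcal{A}$ then admits all small colimits by the standard decomposition that expresses any small colimit as an iterated combination of the two (see \cite[Section 4.4]{Lurie2009}).

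The main technical obstacle is the verification that the geometric realisation $|B_\bullet|$ genuinely computes the pushout; this can be handled either by a direct cofinality argument or by testing the universal property against mapping spaces. A more economical route, which I would adopt if the stated references are available, is to invoke directly the functorial characterisation that a functor between $\infty$-categories admitting finite coproducts preserves small colimits if and only if it preserves finite coproducts and sifted colimits, and to specialise this to the identity functor of $\mathcal{A}$; the desired closure under all small colimits is then immediate.
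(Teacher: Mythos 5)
Your main two-step argument is correct and is essentially the paper's proof with the citations unpacked: the paper simply invokes \cite[Corollary 4.2.3.11]{Lurie2009} (small colimits are generated by finite colimits and filtered colimits), \cite[Lemma 1.3.3.10]{Lurie2017} (finite coproducts plus geometric realisations yield finite colimits), and the observation that filtered diagrams and $\Delta^{op}$ are sifted. Your filtered-colimit-over-finite-subsets construction of infinite coproducts and your two-sided bar construction $B_n = X \sqcup Z^{\sqcup n} \sqcup Y$ for the pushout are exactly the content of those two references, so the substance is the same; you just trade the decomposition ``finite colimits $+$ filtered colimits'' for ``small coproducts $+$ pushouts'' (\cite[Section 4.4]{Lurie2009}), which is equally standard. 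The bar-construction verification you defer is genuinely provable (test against mapping spaces and use the cobar model for the fibre product), though in practice one would simply cite \cite[Lemma 1.3.3.10]{Lurie2017} as the paper does.

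The one genuine flaw is your proposed ``more economical route.'' A statement of the form ``a functor preserves small colimits if and only if it preserves finite coproducts and sifted colimits'' presupposes that the relevant colimits already exist in the source (it is a preservation criterion, such as the paper's own Proposition \ref{prop:col.iff.geor.and.copr} or \cite[Proposition 5.5.8.15]{Lurie2009}); applying it to the identity functor of $\mathcal{A}$ is vacuous and cannot establish the \emph{existence} of colimits in $\mathcal{A}$, which is what this proposition asserts. You should delete that shortcut and rely on the two-step construction, which does work.
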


\begin{proof}
By \cite[Corollary 4.2.3.11]{Lurie2009} an infinity category $\mathcal{A}$ admits small colimits if and only if it admits finite colimits and colimits indexed by the nerves of small filtered partially ordered sets; hence $\mathcal{A}$ admits small colimits if and only if it admits finite colimits and small filtered colimits.

By \cite[Lemma 1.3.3.10]{Lurie2017} $\mathcal{A}$ admits finite colimits if it has finite coproducts and geometric realisations of simplicial objects; now filtered colimits and geometric realisations of simplicial objects are sifted respectively by \cite[Example 5.5.8.3]{Lurie2009}  and \cite[Example 5.5.8.4]{Lurie2009}. Hence an infinity category $\mathcal{A}$ is closed under small colimits if and only if it is closed under finite coproducts and small sifted colimits.
\end{proof}

\begin{lemma}\label{lemma:{T_O}.preserves.geom.real}
    If $\mathcal{C}^{\otimes}$ is a symmetric monoidal category such that the tensor product is compatible with small colimits component-wise and $T={T_O}$ is a monad coming from an operad $O \in Alg(sSeq(\mathcal{C}))$, then ${T_O}$ preserves geometric realisations.
\end{lemma}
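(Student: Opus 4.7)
The plan is to reduce the statement to elementary colimit manipulations by spelling out $T_O$ explicitly. Substituting for $Y$ a symmetric sequence concentrated in degree zero with value $X$ into the composition product formula displayed above collapses every summand with some $n_j>0$, leaving
\[
T_O(X) \;\cong\; \coprod_{k \geq 0} \bigl(O_k \otimes X^{\otimes k}\bigr)_{\Sigma_k}.
\]
My strategy is then to argue separately that each of the three ``layers'' in this description, namely the outer coproduct over $k$, the $\Sigma_k$-coinvariants, and the underlying functor $X \mapsto O_k \otimes X^{\otimes k}$, commutes with geometric realisations.

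First I would dispatch the two outer layers. The coproduct over $k\geq 0$ is itself a (small) colimit, and so is $\Sigma_k$-coinvariants (a colimit indexed by $B\Sigma_k$), so both commute with \emph{any} colimits by the interchange-of-colimits principle applied to the bisimplicial picture coming from the simplicial object $X_{\bullet}$. Next I would handle the inner functor: tensoring with the fixed object $O_k$ preserves all small colimits by the componentwise colimit compatibility of $\otimes$ assumed in the lemma, so the only remaining point is to show that the tensor power $X \mapsto X^{\otimes k}$ preserves geometric realisations.

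For this last step I would factor the tensor power as $\mathcal{C} \xrightarrow{\Delta} \mathcal{C}^k \xrightarrow{\otimes} \mathcal{C}$. The componentwise compatibility of $\otimes$ with small colimits lets me invoke the standard result from Lurie's \emph{Higher Algebra} that a multivariable functor which preserves sifted colimits separately in each variable also preserves them jointly; combined with the cofinality of the diagonal $\mathcal{J} \to \mathcal{J}^k$ for every sifted $\infty$-category $\mathcal{J}$, this yields that $X \mapsto X^{\otimes k}$ preserves sifted colimits, and in particular geometric realisations, which are sifted.

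The main obstacle is exactly this last point. Componentwise colimit-preservation is in general strictly weaker than joint colimit-preservation (already for finite coproducts of a single object), so the interaction of $(-)^{\otimes k}$ with geometric realisations is \emph{not} a formal consequence of the hypothesis on $\otimes$; it genuinely uses siftedness of $\Delta^{op}$. Everything else in the argument is bookkeeping of commutation of (co)limits.
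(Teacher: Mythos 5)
Your argument is correct, and the heart of it coincides with the paper's: both proofs come down to the fact that $N(\Delta)^{op}$ is sifted and that $T_O$ preserves sifted colimits. The difference is in how that second fact is obtained. The paper's proof is a two-line citation: it invokes \cite[Proposition 3.8]{Haugseng2019}, which states precisely that the free-algebra monad of an operad preserves sifted colimits, together with \cite[Lemma 5.5.8.4]{Lurie2009} for siftedness of $N(\Delta)^{op}$. You instead reprove the cited proposition in the single-coloured case by unpacking the composition product: specialising the formula to a symmetric sequence concentrated in degree zero does give $T_O(X) \simeq \coprod_{k}\bigl(O_k \otimes X^{\otimes k}\bigr)_{\Sigma_k}$, the outer coproduct and the $\Sigma_k$-coinvariants commute with all colimits, tensoring with the fixed object $O_k$ preserves colimits by hypothesis, and $X \mapsto X^{\otimes k}$ preserves sifted colimits via cofinality of the diagonal and separate colimit-preservation of $\otimes$ — which is exactly how Haugseng's proof goes as well. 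Your closing remark correctly identifies the one point that is not formal: componentwise compatibility does not give joint colimit-preservation, and siftedness of $\Delta^{op}$ is genuinely used for the tensor powers. What the citation buys the paper is brevity and coverage of the technical care needed to make the levelwise formula and the homotopy coinvariants precise in the $\infty$-categorical setting; what your version buys is a self-contained argument that makes visible exactly where the siftedness hypothesis enters.
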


\begin{proof}
Recall that by geometric realisation we mean the colimit of a simplicial object, i.e. of a functor:
\[
    N(\Delta^{op}) \to \mathcal{C}
\]
The indexing category $N(\Delta^{op})$ is sifted \cite[Lemma 5.5.8.4.]{Lurie2009}. By \cite[Proposition 3.8]{Haugseng2019}, the functor ${T_O}:\mathcal{C} \to \mathcal{C}$ preserves sifted colimits.
\end{proof}

\begin{cor}\label{cor:AlgoC.has.col}
    In the above context, $Alg_O(\mathcal{C})$ is closed under small colimits.
\end{cor}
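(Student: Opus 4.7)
The plan is to combine the equivalence $Alg_O(\mathcal{C}) \simeq LMod_{T_O}(\mathcal{C})$ from \ref{eq:algO(C)=LMod{T_O}(C)} with the preceding proposition, which reduces the statement to producing small sifted colimits and finite coproducts in $LMod_{T_O}(\mathcal{C})$.

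For sifted colimits, I would argue that the forgetful functor $U:LMod_{T_O}(\mathcal{C}) \to \mathcal{C}$ creates them. The proof of Lemma \ref{lemma:{T_O}.preserves.geom.real} in fact establishes, via \cite[Proposition 3.8]{Haugseng2019}, the stronger fact that $T_O$ preserves all small sifted colimits rather than merely geometric realisations. Since $\mathcal{C}$ admits all small colimits by hypothesis, this places us within the scope of \cite[Corollary 4.2.3.5]{Lurie2017}: the forgetful functor creates precisely those colimits of $LMod_{T_O}(\mathcal{C})$ whose underlying diagrams in $\mathcal{C}$ are preserved by the action of $T_O$. Small sifted colimits in $LMod_{T_O}(\mathcal{C})$ follow immediately.

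For finite coproducts one cannot expect the same principle to apply, since the monad $T_O$ need not preserve coproducts (the free associative algebra functor is already a counter-example). Instead, I would exploit the free-forgetful adjunction $F \dashv U$, with $F(X) = T_O X$, together with the bar resolution expressing every module as a geometric realisation of free modules, $M \simeq |B_{\bullet}(T_O,T_O,M)|$ \cite[Section 4.7.3]{Lurie2017}. Since $F$ is a left adjoint, coproducts of free modules are free: $\coprod_i F(X_i) \simeq F(\coprod_i X_i)$, which exists because $\mathcal{C}$ itself has small coproducts. For arbitrary modules $\{M_i\}_{i\in I}$ with $I$ finite, one produces $\coprod_i M_i$ as the geometric realisation of the levelwise coproduct of the bar resolutions, which is levelwise free and whose realisation exists by the sifted step above.

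The main obstacle is precisely this last construction: while the sifted case is essentially formal from \cite{Lurie2017} and Lemma \ref{lemma:{T_O}.preserves.geom.real}, the coproduct step requires the bar resolution to be available and to behave well in the $\infty$-categorical, lax $\mathcal{V}$-linear setting of proposition \ref{prop:opd.to.monad}. This is standard machinery but should be the part to verify with care; alternatively, one could short-circuit the argument by citing a global cocompleteness statement for module categories over monads that preserve sifted colimits.
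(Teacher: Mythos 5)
Your proposal is correct and follows essentially the same route as the paper: the paper likewise reduces via the preceding proposition to sifted colimits plus coproducts, obtains sifted colimits from the fact that $T_O$ preserves them (citing \cite[Remark 2.12]{Haugseng2019}, which is exactly your forgetful-functor argument), and obtains coproducts from the bar-resolution construction, which it outsources to the proof of \cite[Theorem 7.3]{Heine2023a} rather than spelling it out. The step you flag as delicate is precisely what that citation covers, and note that for this statement only the underlying $\infty$-category matters, so the lax $\mathcal{V}$-linear structure plays no role here.
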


\begin{proof}
    By \ref{lemma:{T_O}.preserves.geom.real}, the monad associated to the operad $O$ preserves geometric realisations; it is then shown in the proof of \cite[Theorem 7.3]{Heine2023a} that $Alg_O(\mathcal{C})$ (there appearing as $LMod_T(\mathcal{C})$; we are always working under the previously discussed equivalence) has small coproducts. In \cite[Remark 2.12]{Haugseng2019} it is shown that it has all small (limits and) sifted colimits, which are detected by the forgetful functor. Hence we can apply the previous proposition.
\end{proof}

What follows is an adaptation of \cite[Proposition 5.5.8.15]{Lurie2009} and \cite[Corollary 5.5.8.17]{Lurie2009}, which deal with sifted colimits, to colimits along any small diagram.

\begin{prop}
    Let $\mathcal{I}$ and $\mathcal{A}$ be small categories, with $\mathcal{A}$ admitting small coproducts and geometric realisations. Let $Fun_{\Omega}(\mathcal{P}(\mathcal{I}),\mathcal{A})\subseteq Fun(\mathcal{P}(\mathcal{I}),\mathcal{A})$ be the subcategory of functors preserving small coproducts and geometric realisations. Then:
    \begin{enumerate}
        \item Composition with the Yoneda embedding $j: \mathcal{I} \to\mathcal{P}(\mathcal{I})$ induces and equivalence:
        \[
            Fun_{\Omega}(\mathcal{P}(\mathcal{I}),\mathcal{A}) \to Fun(\mathcal{I},\mathcal{A})
        \]
        \item Any functor $g \in Fun_{\Omega}(\mathcal{P}(\mathcal{I}),\mathcal{A})$ preserves all small colimits.
    \end{enumerate}
\end{prop}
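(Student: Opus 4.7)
The plan is to prove (2) first — that every $g \in Fun_{\Omega}(\mathcal{P}(\mathcal{I}), \mathcal{A})$ preserves all small colimits — and then to deduce (1) by combining (2) with the universal property of the presheaf $\infty$-category.

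For (2), I would use the classical observation that every colimit is a coequaliser of coproducts, lifted to the $\infty$-setting with the coequaliser replaced by a geometric realisation. Given $F : K \to \mathcal{P}(\mathcal{I})$, one forms the Bousfield--Kan-style simplicial replacement
\[
    \mathrm{srep}(F)_n = \coprod_{\sigma : \Delta^n \to K} F(\sigma(0)),
\]
with face maps induced (for $i>0$) by restriction to the $i$th face and (for $i=0$) by applying $F$ to the initial edge of $\sigma$, and with the standard degeneracies. A standard $\infty$-categorical argument identifies $|\mathrm{srep}(F)_\bullet| \simeq \colim_K F$. Since $g$ preserves small coproducts, it commutes with the formation of each $\mathrm{srep}(F)_n$; since $g$ preserves geometric realisations, it commutes with the realisation of the whole simplicial object; together these give $g(\colim_K F) \simeq \colim_K (g \circ F)$. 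Applied to the identity functor on $\mathcal{A}$, the same argument shows that $\mathcal{A}$ admits all small colimits, a fact we need for (1) which refines the previous proposition's conclusion to our precise hypotheses (small coproducts plus geometric realisations).

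For (1), since $\mathcal{A}$ has small colimits, \cite[Theorem 5.1.5.6]{Lurie2009} asserts that composition with the Yoneda embedding $j: \mathcal{I} \to \mathcal{P}(\mathcal{I})$ induces an equivalence $Fun^{L}(\mathcal{P}(\mathcal{I}), \mathcal{A}) \xrightarrow{\sim} Fun(\mathcal{I}, \mathcal{A})$, where $Fun^{L}$ denotes functors preserving small colimits. The inclusion $Fun^{L} \subseteq Fun_{\Omega}$ is immediate, and (2) furnishes the reverse inclusion; hence the two full subcategories coincide, so composition with $j$ is already an equivalence out of $Fun_{\Omega}$.

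The main obstacle is the $\infty$-categorical identification $|\mathrm{srep}(F)_\bullet| \simeq \colim_K F$. In ordinary category theory this reduces to the elementary statement that any colimit is a coequaliser of coproducts, but in the $\infty$-setting the higher simplicial coherence must be tracked carefully. The cleanest route is to exhibit a cofinal functor $N(\Delta^{op}) \to K$ extracted from the simplicial replacement and invoke cofinality to identify the two colimits, or alternatively to apply an established $\infty$-categorical Bousfield--Kan-type formula for homotopy colimits.
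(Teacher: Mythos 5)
Your proposal runs in the opposite direction from the paper's and is, in outline, a legitimate alternative. The paper proves (1) first: by \cite[Lemma 5.5.8.13]{Lurie2009}, $\mathcal{P}(\mathcal{I})$ is the smallest full subcategory of itself containing the representables and closed under small coproducts and geometric realisations, so \cite[Remark 5.3.5.9 and Proposition 4.3.2.15]{Lurie2009} give the restriction equivalence directly; it then deduces (2) by reducing to the case of a cocomplete target (a functor preserves colimits iff its composites with all corepresentables $\mathcal{A}\to\mathcal{S}^{op}$ do) and observing that the inclusion of the colimit-preserving functors into $Fun_{\Omega}(\mathcal{P}(\mathcal{I}),\mathcal{A})$ must be an equivalence because both restrict equivalently to $Fun(\mathcal{I},\mathcal{A})$. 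You instead prove (2) first, by decomposing an arbitrary colimit as a geometric realisation of level-wise coproducts, and then (1) follows from \cite[Theorem 5.1.5.6]{Lurie2009} once $Fun^{L}=Fun_{\Omega}$ is known; this buys a more hands-on explanation of \emph{why} coproducts plus realisations generate all colimits, and it establishes cocompleteness of $\mathcal{A}$ along the way, which the paper's argument deliberately sidesteps.

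The one point you must repair is the proposed justification via ``a cofinal functor $N(\Delta^{op})\to K$.'' No such cofinal functor exists for general $K$: take $K$ a two-point discrete simplicial set; $N(\Delta^{op})$ is connected (indeed sifted), so any map out of it hits only one component and cannot be cofinal. The correct statement is that the vertex map $N(\Delta_{/K})\to K$ from the nerve of the category of simplices of $K$ is cofinal \cite[Proposition 4.2.3.14 and its proof]{Lurie2009}; the $\Delta^{op}$-shape of the simplicial replacement appears only after left Kan extension along the projection from (the opposite of) $\Delta_{/K}$ to $\Delta^{op}$, whose discrete fibres $K_n$ are what produce the level-wise coproducts and guarantee that the Kan extension exists under your hypotheses. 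Your fallback of invoking an established $\infty$-categorical Bousfield--Kan formula is the right way to package this, but it is the crux of your route and needs a precise citation or the argument just sketched; as written, the first of your two suggested justifications would fail.
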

\begin{proof}
By \cite[Lemma 5.5.8.13]{Lurie2009} $\mathcal{P}(\mathcal{I})$ is the smallest full subcategory of $\mathcal{P}(\mathcal{I})$ containing the essential image of the Yoneda embedding and closed under small coproducts and geometric realisations. Hence, by \cite[Remark 5.3.5.9]{Lurie2009} and \cite[Proposition 4.3.2.15]{Lurie2009} we have the equivalence stated in the first claim.

To show that any $g\in Fun_{\Omega}(\mathcal{P}(\mathcal{I}),\mathcal{A})$ preserves all small colimits, we first notice, as in the proof of \cite[Proposition 5.5.8.15]{Lurie2009}, that $g$ preserves colimits if and only if $e \circ g$ does, for any $e: \mathcal{A} \to \mathcal{S}^{op}$ representable. Hence we may as well suppose $\mathcal{A}$ closed under colimits. Let then $Fun^L_{\Omega}(\mathcal{P}(\mathcal{I}),\mathcal{A})$ be the full subcategory of colimit preserving functors in $Fun_{\Omega}(\mathcal{P}(\mathcal{I}),\mathcal{A})$. As $\mathcal{P}(\mathcal{I})$ is the smallest full subcategory of $\mathcal{P}(\mathcal{I})$ containing the essential image of the Yoneda embedding and closed under small colimits, \cite[Remark 5.3.5.9]{Lurie2009} again provides an equivalence:
\[
    Fun^L_{\Omega}(\mathcal{P}(\mathcal{I}),\mathcal{A}) \cong Fun(\mathcal{I}, \mathcal{A})
\]
Hence the inclusion:
\[
    Fun^L_{\Omega}(\mathcal{P}(\mathcal{I}),\mathcal{A}) \subseteq Fun_{\Omega}(\mathcal{P}(\mathcal{I}),\mathcal{A})
\]
is an equivalence and therefore an equality, proving the thesis.
\end{proof}

\begin{prop}\label{prop:col.iff.geor.and.copr}
    Let $\mathcal{A}$ and $\mathcal{B}$ be $\infty$-categories, with $\mathcal{A}$  and $\mathcal{B}$ closed under small coproducts and geometric realisations. Then a functor $F: \mathcal{A} \to \mathcal{B}$ preserves small colimits if and only if it preserves small coproducts and geometric realisations.
\end{prop}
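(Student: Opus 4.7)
The plan is to deduce both directions from the previous proposition. The forward implication is immediate, since small coproducts and geometric realisations are themselves instances of small colimits. For the converse, I fix a small diagram $p \colon K \to \mathcal{A}$ and aim to show that $F(\colim p) \simeq \colim(F\circ p)$. The strategy is to route the comparison through the free cocompletion $\mathcal{P}(K)$, so that the heavy lifting reduces to the previous proposition applied once.

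By the first proposition of this subsection, the hypothesis that $\mathcal{A}$ and $\mathcal{B}$ admit small coproducts and geometric realisations is equivalent to their being closed under all small colimits. In particular, the left Kan extension $\tilde{p} \colon \mathcal{P}(K) \to \mathcal{A}$ of $p$ along the Yoneda embedding $j \colon K \to \mathcal{P}(K)$ exists and preserves small colimits, by \cite[Theorem 5.1.5.6]{Lurie2009}. Composing with $F$, the functor $F \circ \tilde{p} \colon \mathcal{P}(K) \to \mathcal{B}$ still preserves small coproducts and geometric realisations, since $\tilde{p}$ preserves every small colimit and $F$ preserves these two classes by assumption. Applying the previous proposition to $F \circ \tilde{p}$ then upgrades this to preservation of all small colimits.

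Given this, the computation is formal:
\[
    F(\colim_K p) \simeq F\bigl(\tilde{p}(\colim_K j)\bigr) \simeq (F\circ \tilde{p})(\colim_K j) \simeq \colim_K\bigl((F\circ \tilde{p})\circ j\bigr) \simeq \colim_K(F\circ p),
\]
where the first equivalence uses that $\tilde{p}$ preserves colimits, the third uses the same for $F\circ \tilde{p}$, and the outer equivalences use the defining identity $\tilde{p} \circ j \simeq p$.

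The only minor obstacle I anticipate is a size issue: the preceding proposition was stated with the target category small, whereas $\mathcal{A}$ and $\mathcal{B}$ here need not be. This is pure bookkeeping and can be resolved either by enlarging the universe or by replacing $\mathcal{B}$ with the small full subcategory generated under small colimits by the image of $F\circ p$ (and similarly for $\mathcal{A}$ and $p$); no conceptual difficulty is introduced, since the argument is entirely formal once $F\circ \tilde{p}$ is seen to lie in $\mathrm{Fun}_\Omega(\mathcal{P}(K),\mathcal{B})$.
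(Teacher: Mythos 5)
Your argument is correct and follows essentially the same route as the paper: both factor the diagram through $\mathcal{P}(K)$ via a colimit-preserving extension of the Yoneda embedding and then apply the preceding proposition to $F\circ\tilde{p}$ to upgrade preservation of coproducts and geometric realisations to preservation of all small colimits. One small quibble: the cocompleteness of $\mathcal{A}$ does not follow from the first proposition of this section (which concerns sifted colimits and finite coproducts, not geometric realisations), and you do not actually need it --- part (1) of the preceding proposition already supplies $\tilde{p}=q\in Fun_{\Omega}(\mathcal{P}(K),\mathcal{A})$ without invoking \cite[Theorem 5.1.5.6]{Lurie2009}, and part (2) makes it colimit-preserving, which is exactly how the paper proceeds.
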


\begin{proof}
    One direction is clear. On the other hand, suppose $F$ preserves small coproducts and geometric realisations. Let $\bar{i}:\mathcal{I}^{\vartriangleright} \to \mathcal{A}$ be a colimit diagram; we wish to show that $F\circ \bar{p}$ is as well.

    Let $i=\bar{i}_{|\mathcal{I}}:\mathcal{I} \to \mathcal{A}$; by the previous proposition, $i$ is homotopic via left Kan extension to the composite:
    \[
        \mathcal{I} \xrightarrow{j} \mathcal{P}(\mathcal{I}) \xrightarrow{q} \mathcal{A}
    \]
    with $j$ the Yoneda embedding and $q$ preserving small colimits.

    Let $\bar{j}: \mathcal{I}^{\vartriangleright} \to \mathcal{P}(\mathcal{I})$ be a colimit for $j$ (which exists because $\mathcal{P}(\mathcal{I}) $ is opportunely closed under colimits). As $q$ preserves colimits, $q \circ \bar{j}$ is a colimit for $q \circ j \cong i$, hence $q \circ \bar{j} \cong \bar{i}$. To get the thesis, we can then equivalently verify that $F\circ q\circ \bar{j}$ is a colimit, which follows once we show that $F \circ q: \mathcal{P}(\mathcal{I}) \to \mathcal{B}$ preserves colimits. But $q$ preserves small colimits and $F$ preserves small coproducts and geometric realisations: again by the previous result, $F \circ q$ preserves small colimits.
\end{proof}

\begin{prop} \label{prop:tensoring.preserves.colimits}
    Let $V^{\otimes}$ be a monoidal category compatible with geometric realisations, and let $\mathcal{C}$ be a left $\mathcal{V}^{\otimes}$-tensored category, compatible with geometric realisations.
    Let $T$ be a lax $\mathcal{V}$-linear monad on $\mathcal{C}$ that preserves geometric realisations. Suppose that $LMod_T(\mathcal{C})$ is closed under colimits.
    
    Observe that under this assumptions \cite[Prop 5.41.(3)]{Heine2023a} holds, in other words, $LMod_T(\mathcal{C})$ is tensored on $\mathcal{V}$. Suppose that given $V \in \mathcal{V}$, the functor:
    \[
        V \otimes (-): \mathcal{C} \to \mathcal{C}
    \]
    preserves small colimits. Then the induced functor:
    \[
        V \otimes (-): LMod_T(\mathcal{C}) \to LMod_T(\mathcal{C})
    \]
    does as well.
\end{prop}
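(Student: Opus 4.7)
The plan is to apply Proposition~\ref{prop:col.iff.geor.and.copr} to the endofunctor $V \otimes (-): LMod_T(\mathcal{C}) \to LMod_T(\mathcal{C})$. Since $LMod_T(\mathcal{C})$ is closed under small colimits by hypothesis, it admits in particular small coproducts and geometric realisations, so it suffices to check that $V \otimes (-)$ preserves these two classes of colimits separately.

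For geometric realisations, I would exploit the forgetful functor $U: LMod_T(\mathcal{C}) \to \mathcal{C}$. The construction of the induced tensoring from \cite[Proposition 5.41]{Heine2023a} comes equipped with a canonical natural equivalence $U(V \otimes M) \simeq V \otimes U(M)$. Since $T$ preserves geometric realisations, $U$ creates them (the same fact used inside \cite[Theorem 7.3]{Heine2023a} and underlying corollary~\ref{cor:AlgoC.has.col}). Given a simplicial object $M_\bullet$ in $LMod_T(\mathcal{C})$, the canonical comparison $|V \otimes M_\bullet| \to V \otimes |M_\bullet|$ in $LMod_T(\mathcal{C})$ becomes, after applying $U$, the canonical map $|V \otimes U(M_\bullet)| \to V \otimes |U(M_\bullet)|$ in $\mathcal{C}$, which is an equivalence by the hypothesis that $V \otimes (-): \mathcal{C} \to \mathcal{C}$ preserves small colimits. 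Conservativity of $U$ then closes the argument for geometric realisations.

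For coproducts, I would reuse the explicit construction inside the proof of \cite[Theorem 7.3]{Heine2023a}: a coproduct $\coprod_i M_i$ in $LMod_T(\mathcal{C})$ is realised as the geometric realisation of a bar-type simplicial object whose components are free modules built from coproducts taken in $\mathcal{C}$. Because $V \otimes (-): \mathcal{C} \to \mathcal{C}$ preserves small colimits and interacts via the lax $\mathcal{V}$-linear structure with $T$ in a way that transfers to free modules (using once more the compatibility $U(V \otimes M) \simeq V \otimes U(M)$), applying $V \otimes (-)$ in $LMod_T(\mathcal{C})$ to such a resolution produces the analogous resolution computing $\coprod_i (V \otimes M_i)$. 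Combined with the preservation of geometric realisations just established, this yields the preservation of coproducts. The hard part is precisely this last step: carefully tracking how the induced $\mathcal{V}$-tensor on $LMod_T(\mathcal{C})$ interacts with the free-module functor, so that the bar-resolution formula for coproducts is manifestly compatible with $V \otimes (-)$; once this bookkeeping is in place, everything reduces to the colimit-preservation hypothesis on $\mathcal{C}$ and to Proposition~\ref{prop:col.iff.geor.and.copr}.
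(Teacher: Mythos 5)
Your overall skeleton is the same as the paper's: reduce via Proposition~\ref{prop:col.iff.geor.and.copr} to checking coproducts and geometric realisations separately, and handle coproducts by resolving each module by free modules as in \cite[Example 4.7.2.7]{Lurie2017} and the proof of \cite[Theorem 7.3]{Heine2023a}. However, there is a genuine gap in the compatibility you invoke. You repeatedly use a canonical equivalence $U(V \tilde{\otimes} M) \simeq V \otimes U(M)$, i.e.\ you assume the \emph{forgetful} functor is strongly $\mathcal{V}$-linear. For a merely \emph{lax} $\mathcal{V}$-linear monad $T$ this is false in general: the compatibility that is actually available, namely \cite[Proposition 5.41.(3)]{Heine2023a}, is that the \emph{free} functor preserves the action, $V \tilde{\otimes} T(C) \simeq T(V \otimes C)$. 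Already on free modules one sees the problem: $U(V \tilde{\otimes} T(C)) \simeq T(V \otimes C)$, whereas $V \otimes U(T(C)) = V \otimes T(C)$, and these differ unless $T$ is strongly linear. Consequently your argument for geometric realisations --- push the comparison map down along $U$, use conservativity --- does not get off the ground, because the map you would apply $U$ to does not become the naive comparison map in $\mathcal{C}$. The paper instead simply cites \cite[Corollary 7.5]{Heine2023a} for the fact that $V \tilde{\otimes} (-)$ preserves geometric realisations; that statement is nontrivial precisely because $U$ is only lax linear, and it is the right tool here.

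For the coproduct step your plan is essentially the paper's, modulo the same misattribution of linearity to $U$ rather than to $T$: the paper writes each $M_i$ as $\colim_{j \in \Delta^{op}} T(M_{i,j})$, commutes the coproduct past the geometric realisation, and then shuttles $V \tilde{\otimes}(-)$ through the free functor using $V \tilde{\otimes} T(-) \simeq T(V \otimes -)$ and the colimit hypothesis on $\mathcal{C}$, invoking \cite[Corollary 7.5]{Heine2023a} at both ends to move the tensor past the realisations. The ``bookkeeping'' you defer is exactly this explicit chain of equivalences, and it goes through once you replace the forgetful-functor compatibility with the free-functor one.
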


\begin{proof}
To make the notation clearer, we distinguish in this proof between the symbols of the tensor actions. In particular, they are denoted as:
\begin{gather*}
    V \otimes (-): \mathcal{C} \to \mathcal{C} \\
    V \tilde{\otimes} (-): LMod_T(\mathcal{C}) \to LMod_T(\mathcal{C}).
\end{gather*}

As $LMod_T(\mathcal{C})$ is closed under colimits, we can apply proposition \ref{prop:col.iff.geor.and.copr}: a functor $LMod_T(\mathcal{C})\to LMod_T(\mathcal{C})$ preserves small colimits if and only if it preserves small coproducts and geometric realisations. It was proven in \cite[Corollary 7.5]{Heine2023a} that under our assumptions
\[
V \tilde{\otimes} (-): LMod_T(\mathcal{C}) \to LMod_T(\mathcal{C})
\]
preserves geometric realisations. It is then enough to show that it preserves small coproducts.

By \cite[Proposition 5.41.3]{Heine2023a}, the free functor $T: \mathcal{C} \to LMod_T(\mathcal{C})$ preserves the action, so we have a commutative square:

\[
\begin{tikzcd}
    \mathcal{C} 
        \arrow[r, "V \otimes (-)"]
        \arrow[d, bend right=15, swap, "T"]
    &\mathcal{C}
        \arrow[d, bend right=15, swap, "T"]
    \\LMod_T(\mathcal{C})
        \arrow[r, "V \tilde{\otimes} (-)"]
        \arrow[u,  bend right=15, swap, "U"]
    &LMod_T(\mathcal{C})
        \arrow[u, bend right=15, swap, "U"]
\end{tikzcd}\addtocounter{tikzcomd}{1}
\]
where $U: LMod_T(\mathcal{C}) \to \mathcal{C} $ indicates the forgetful functor, right adjoint to the free monad functor.

Let $M=\coprod_I M_i \in LMod_T(\mathcal{C})$ be a coproduct of $T$-modules. By \cite[Example 4.7.2.7.]{Lurie2017}, each $M_i$ can be expressed as the geometric realisation of a simplicial object with free levels:
\[
    M_i=\colim_{j \in \Delta^{op}} T(M_{i,j})
\]
So:
\begin{align*}
V \tilde{\otimes} \coprod_{i\in I} M_i & \cong  V \tilde{\otimes} \coprod_{i\in I} \colim_{j \in \Delta^{op}} T(M_{i,j}) 
\intertext{We can commute the coproduct and the geometric realisation because the former has trivial transition maps:}
& \cong V \tilde{\otimes}  \colim_{j \in \Delta^{op}} \coprod_{i\in I} T(M_{i,j}) 
\intertext{By \cite[Corollary 7.5]{Heine2023a}, the tensoring $\tilde{\otimes}$ induced on $LMod_T(\mathcal{C})$ is compatible with geometric realisations:}
& \cong \colim_{j \in \Delta^{op}} ( V \tilde{\otimes}   \coprod_{i\in I} T(M_{i,j}))  
\intertext{As the free functor $T: \mathcal{C}\to LMod_T(\mathcal{C})$ is a left adjoint, it preserves colimits:}
& \cong \colim_{j \in \Delta^{op}} ( V \tilde{\otimes}   T(\coprod_{i\in I} M_{i,j}))  
\intertext{By \cite[Proposition 5.41.(3)]{Heine2023a}, the free functor preserves the tensor action:}
& \cong \colim_{j \in \Delta^{op}} T( V \otimes   \coprod_{i\in I} M_{i,j}))  
\intertext{By hypothesis, $V \otimes (-): \mathcal{C} \to \mathcal{C}$ preserves small colimits, in particular, coproducts:}
& \cong \colim_{j \in \Delta^{op}}  T( \coprod_{i\in I}(V \otimes   M_{i,j}))  
\intertext{Again since $T: \mathcal{C}\to LMod_T(\mathcal{C})$ is a left adjoint:}
& \cong \colim_{j \in \Delta^{op}} \coprod_{i\in I} T( V \otimes   M_{i,j})  
\intertext{Again because the transition maps in a coproduct are trivial:}
& \cong \coprod_{i\in I} \colim_{j \in \Delta^{op}}  T( V \otimes   M_{i,j})  
\intertext{Again because of \cite[Proposition 5.41.(3)]{Heine2023a}:}
& \cong \coprod_{i\in I} \colim_{j \in \Delta^{op}}   V \tilde{\otimes}  T( M_{i,j})  
\intertext{Again because \cite[Corollary 7.5]{Heine2023a} $\tilde{\otimes}$ is compatible with geometric realisations:}
& \cong \coprod_{i\in I}   V \tilde{\otimes} \colim_{j \in \Delta^{op}}  T( M_{i,j}) \cong \coprod_{i\in I} V \tilde{\otimes} M_i.
\end{align*}
As we wanted to show.
\end{proof}

\begin{cor}
    Let $V^{\otimes}$ be a monoidal category compatible with geometric realisations, and let $\mathcal{C}$ be a left $\mathcal{V}^{\otimes}$-tensored category, compatible with geometric realisations. Let $O$ be an operad in $\mathcal{C}$. Suppose that given $V \in \mathcal{V}$, the functor:
    \[
        V \otimes (-): \mathcal{C} \to \mathcal{C}
    \]
    preserves small colimits. Then the induced functor:
    \[
        V \otimes (-): Alg_O(\mathcal{C}) \to Alg_O(\mathcal{C})
    \]
    does as well.
\end{cor}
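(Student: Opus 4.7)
The plan is to reduce the corollary directly to Proposition \ref{prop:tensoring.preserves.colimits} by passing from the operad $O$ to its associated monad $T_O$ on $\mathcal{C}$. First, I would invoke Proposition \ref{prop:opd.to.monad} to obtain from $O \in Alg(sSeq(\mathcal{C}))$ a lax $\mathcal{V}$-linear monad $T_O$ on $\mathcal{C}$, and use the equivalence \ref{eq:algO(C)=LMod{T_O}(C)} to identify $Alg_O(\mathcal{C}) \simeq LMod_{T_O}(\mathcal{C})$. Under this identification, the induced tensoring action of $\mathcal{V}$ on $Alg_O(\mathcal{C})$ coming from Proposition \ref{prop:tensor.algebras} agrees with the action of $\mathcal{V}$ on $LMod_{T_O}(\mathcal{C})$ provided by Corollary \ref{cor:modules.are.enriched}.

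Next I would verify the hypotheses of Proposition \ref{prop:tensoring.preserves.colimits}. Lemma \ref{lemma:{T_O}.preserves.geom.real} ensures that $T_O$ preserves geometric realisations. Corollary \ref{cor:AlgoC.has.col} gives that $Alg_O(\mathcal{C}) = LMod_{T_O}(\mathcal{C})$ is closed under small colimits. The hypotheses on $\mathcal{V}^{\otimes}$, on $\mathcal{C}$, and on the functor $V \otimes (-): \mathcal{C} \to \mathcal{C}$ are immediate from the statement of the corollary.

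With all hypotheses verified, Proposition \ref{prop:tensoring.preserves.colimits} applies and yields that the induced functor
\[
    V \otimes (-): LMod_{T_O}(\mathcal{C}) \to LMod_{T_O}(\mathcal{C})
\]
preserves small colimits. Transporting back along the equivalence $Alg_O(\mathcal{C}) \simeq LMod_{T_O}(\mathcal{C})$ gives the desired conclusion. In practice, there is no real obstacle to this argument since the heavy lifting has already been done in the previous results; the only point requiring care is to confirm that the $\mathcal{V}$-tensoring on $Alg_O(\mathcal{C})$ obtained via Proposition \ref{prop:tensor.algebras} is the same as the one on $LMod_{T_O}(\mathcal{C})$ used in Proposition \ref{prop:tensoring.preserves.colimits}, which is built into the construction of $T_O$ as a lax $\mathcal{V}$-linear monad and the universal property of the Eilenberg--Moore object.
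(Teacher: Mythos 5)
Your proposal is correct and follows essentially the same route as the paper: reduce to Proposition \ref{prop:tensoring.preserves.colimits} applied to the lax $\mathcal{V}$-linear monad $T_O$ from Proposition \ref{prop:opd.to.monad}, using Corollary \ref{cor:AlgoC.has.col} for closure under colimits. Your additional explicit checks (Lemma \ref{lemma:{T_O}.preserves.geom.real} for preservation of geometric realisations, and the compatibility of the two tensorings) are points the paper leaves implicit, but the argument is the same.
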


\begin{proof}
    Corollary \ref{cor:AlgoC.has.col} states that in this context $Alg_O(\mathcal{C})$ admits all colimits. We can then apply the previous theorem to the monad $T={T_O}$ (which is lax $\mathcal{V}^{\otimes}$-linear after \ref{prop:opd.to.monad}).
\end{proof}

\begin{cor} \label{cor:cotensors.on.algo(C)}
    Assume $\mathcal{C}^{\otimes}$ is a presentably symmetric monoidal category, left tensored on a symmetric monoidal category $\mathcal{V}$ compatible with geometric realisations. Let $O \in Alg(sSeq(\mathcal{C}))$ be an operad in $\mathcal{C}$.
    Let $V \in \mathcal{V}$ be such that the functor:
    \[
        V \otimes (-): \mathcal{C} \to \mathcal{C}
    \]
    preserves small colimits. Then there exists a cotensor functor:
    \[
        (-)^V: Alg_O(\mathcal{C}) \to Alg_O(\mathcal{C}),
    \]
    right adjoint to the tensoring action:
    \[
        V \otimes (-): Alg_O(\mathcal{C}) \to Alg_O(\mathcal{C}).
    \]
\end{cor}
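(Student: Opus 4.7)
The plan is to invoke the adjoint functor theorem: a colimit-preserving functor between presentable $\infty$-categories admits a right adjoint \cite[Corollary 5.5.2.9]{Lurie2009}. Concretely, I want to show that $V \otimes (-): Alg_O(\mathcal{C}) \to Alg_O(\mathcal{C})$ is a functor between presentable $\infty$-categories that preserves small colimits; the desired cotensor $(-)^V$ then appears as its right adjoint.

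First I would collect the colimit-preservation statement. The hypothesis of the previous corollary is immediately satisfied: $\mathcal{C}$ is presentably symmetric monoidal (hence compatible with geometric realisations), left tensored on $\mathcal{V}$ compatibly with geometric realisations, and $V \otimes (-): \mathcal{C} \to \mathcal{C}$ preserves small colimits. Applying the previous corollary yields that $V \otimes (-): Alg_O(\mathcal{C}) \to Alg_O(\mathcal{C})$ preserves small colimits.

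Next I would verify presentability of $Alg_O(\mathcal{C})$. Under the equivalence $Alg_O(\mathcal{C}) \simeq LMod_{T_O}(\mathcal{C})$ from \ref{eq:algO(C)=LMod{T_O}(C)}, the right-hand side is the category of algebras for the monad $T_O$ on the presentable $\infty$-category $\mathcal{C}$. By \cite[Proposition 3.8]{Haugseng2019} (already invoked in the proof of lemma \ref{lemma:{T_O}.preserves.geom.real}), the monad $T_O$ preserves sifted colimits, so in particular it is accessible. By corollary \ref{cor:AlgoC.has.col}, $Alg_O(\mathcal{C})$ also admits all small colimits. An accessible monad on a presentable $\infty$-category has presentable category of algebras (this is a direct application of \cite[Corollary 4.2.3.7]{Lurie2017}), so $Alg_O(\mathcal{C})$ is presentable.

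Having both ingredients, I can then apply the adjoint functor theorem to produce the right adjoint $(-)^V: Alg_O(\mathcal{C}) \to Alg_O(\mathcal{C})$. The main obstacle I expect is purely bookkeeping: checking that the hypotheses of the previous corollary and of the presentability criterion genuinely match the current standing assumptions, in particular that the presentable monoidal assumption on $\mathcal{C}^{\otimes}$ supplies the compatibility with geometric realisations needed upstream, and that the accessibility of $T_O$ (via preservation of sifted colimits) is enough to conclude presentability of $LMod_{T_O}(\mathcal{C})$. Both are standard but worth spelling out.
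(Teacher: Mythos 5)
Your overall architecture --- colimit preservation from the preceding corollary plus the adjoint functor theorem \cite[Corollary 5.5.2.9]{Lurie2009} --- is exactly the paper's. The divergence, and the gap, is in how you establish presentability of $Alg_O(\mathcal{C})$. You claim that accessibility of $T_O$ (via preservation of sifted colimits) together with \cite[Proposition 4.2.3.7]{Lurie2017} yields presentability of $LMod_{T_O}(\mathcal{C})$ as a ``direct application''. It does not: Lurie's result requires the action --- equivalently, the endofunctor underlying the algebra one takes modules over --- to preserve \emph{all} small colimits, whereas $T_O$ only preserves sifted ones (the free $\mathbb{E}_{\infty}$-algebra monad, for instance, does not commute with coproducts). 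The paper is explicit about precisely this obstruction in the remark immediately following the corollary: it is not clear in general what hypotheses on a monad $T$ over a presentable $\mathcal{C}$ guarantee presentability of $LMod_T(\mathcal{C})$, and the candidate references (Ad\'amek--Rosick\'y for ordinary categories, Lurie's 4.2.3.7 for colimit-preserving actions) do not cover the case at hand. So the presentability step, as written, is unsupported.

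The paper avoids general monad theory here: since $O$ is an operad, i.e.\ an associative algebra in $sSeq(\mathcal{C})$, and $\mathcal{C}^{\otimes}$ is presentably symmetric monoidal, presentability of $Alg_O(\mathcal{C})$ is available off the shelf as \cite[Corollary 3.9]{Haugseng2019}, and the rest of the proof is exactly your steps one and three. If you wanted to salvage your route, you would need an actual reference establishing that an accessible monad on a presentable $\infty$-category has an \emph{accessible} Eilenberg--Moore category, and then combine that with cocompleteness from corollary \ref{cor:AlgoC.has.col} to conclude presentability; but that is a nontrivial input, not a corollary of the citation you give.
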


\begin{proof}
    Recall that $\mathcal{C}^{\otimes}$ is a presentably symmetric monoidal infinity category if the underlying category is presentable and the tensor product preserves colimits independently in both variables. In these conditions, $Alg_O(\mathcal{C})$ is presentable as well \cite[Corollary 3.9]{Haugseng2019}.

    By the last corollary \ref{cor:cotensors.on.algo(C)}, the functor:
    \[
        V \otimes (-): Alg_O(\mathcal{C}) \to Alg_O(\mathcal{C})
    \]
    preserves small colimits, so, by the adjoint functor theorem \cite[Corollary 5.5.2.9]{Lurie2009} it admits a right adjoint.
\end{proof}

\begin{rmk}
    For this last part, we restricted ourselves to monads coming from operads; in fact, it is not clear what properties should a general monad $T$ on a presentable $\infty$-category $\mathcal{C}$ possess, to produce a presentable $\infty$-category of left modules $LMod_T(\mathcal{C})$.

    Some results in this direction that might be worth mentioning are \cite[Theorem 2.78]{AdaRos1994}, which deals with accessible monads in the context of classical categories, and \cite[Proposition 4.2.3.7]{Lurie2017}, which requires a colimits-preserving action.
\end{rmk}

\clearpage

\section{Tensoring against ``idempotent'' algebras}\label{sec:idemp.enr}

We begin this section with an observation on tensor units.

\begin{rmk}\label{rmk:unit}
The theory of monoidal $\infty$ categories is intrinsically unital: given a co-Cartesian fibration of $\infty$-operads $\mathcal{V}^{\otimes} \to Assoc^{\otimes}$, the unit corresponds to the morphism $\langle 0 \rangle \to \langle 1 \rangle$ in $Assoc$ \cite[Remark 4.1.1.12.]{Lurie2017}.

This behaviour propagates to the world of left tensored $\infty$-categories. Let $\mathcal{O}^{\otimes} \to \mathcal{LM}^{\otimes}$ be a co-Cartesian fibration of $\infty$-operads, and call $\mathcal{V}^{\otimes}$ the fibre over $Assoc^{\otimes} \subset \mathcal{LM}^{\otimes}$ and $\mathcal{C}$ the fibre over $(\langle 1\rangle, \langle 1\rangle^{\circ} )$, so that $\mathcal{C}$ is left tensored on $\mathcal{V}^{\otimes}$. Then a unit $\mathbf{1}_V$ of $\mathcal{V}$,   given by the fibre on $(\langle 0 \rangle, \varnothing) \in \mathcal{LM}$, provides a unit for the tensoring as well.
\end{rmk}

For the main result of this section, we need that the transferred $\mathcal{V}$-action on the category of algebras $Alg_O(\mathcal{C})$ is tensored and cotensored, so our hypothesis on the categories $\mathcal{C}$ and $\mathcal{V}^{\otimes}$ must answer both features.

Let $\mathcal{V}$ be an $\infty$-category that admits finite products and small colimits, such that the product preserves small colimits independently in each variable. We view $\mathcal{V}$ as a symmetric monoidal $\infty$-category $\mathcal{V}^{\otimes}$ via Cartesian product. 

Let $\mathcal{C}^{\otimes}$ be a presentably symmetric monoidal $\infty$-category and let $\phi: \mathcal{V}^{\otimes} \to \mathcal{C}^{\otimes}$ be a symmetric monoidal functor preserving small colimits.

Assume that the resulting tensor action of $\mathcal{V}$ on $\mathcal{C}$ is also enriched.

\begin{rmk}
    Observe that, under the above assumptions, for any fixed $M \in \mathcal{C}$ the functor:
    \[
        (-) \otimes (-) : \mathcal{V} \times \mathcal{C} \to \mathcal{C}
    \]
    preserves colimits independently in both variables. As $\mathcal{C}$ is presentable, for any $V \in \mathcal{V}$ the functor:
    \[
        V \otimes (-) : \mathcal{C} \to \mathcal{C}
    \]
    admits a right adjoint: we have all cotensors. On the other hand, without presentability assumptions on $\mathcal{V}$, for each $M \in \mathcal{C}$ the functor:
    \[
        (-) \otimes M : \mathcal{V} \to \mathcal{C}
    \]
    does not have a right adjoint in general, so we must additionally ask for morphism objects to exist.
\end{rmk}

Finally, let $\mathcal{O}^{\otimes}$ be an $\infty$-operad in $\mathcal{C}$.

\begin{thm} \label{thm:X.otimes.R.cong.R}
    Let $X \in \mathcal{V}$ be an object with a pointing map $\mathbf{1}_{\mathcal{V}} \to X$.
    
    Let $R \in Alg_{\mathcal{O}}({\mathcal{C}})$ be an $O$-algebra in ${\mathcal{C}}$ with an equivalence $R \coprod R \simeq R$.
    
    Recall by remark \ref{rmk:tensor.1.is.final} that in this context the final object $\mathbf{1}_{\mathcal{V}}$ is also a tensor unit for $\mathcal{V}$ and a unit for the action of $\mathcal{V}$ on $Alg_{\mathcal{O}}({\mathcal{C}})$;  consider the natural morphism $X \to \mathbf{1}_{\mathcal{V}}$. 
    Then the map in $Alg_{\mathcal{O}}({\mathcal{C}})$:
    \[
        X \otimes R \to \mathbf{1}_{\mathcal{V}} \otimes R \cong R
    \]
    is an equivalence. 
\end{thm}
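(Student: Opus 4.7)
The plan is to verify that the map $X \otimes R \to R$ is an equivalence by testing it against an arbitrary $S \in Alg_{\mathcal{O}}({\mathcal{C}})$ via the Yoneda lemma, exploiting the $\mathcal{V}$-enrichment of $Alg_{\mathcal{O}}({\mathcal{C}})$ produced by proposition \ref{prop:tensor.algebras}.

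First I would translate via the enriched adjunction: setting $Y := Mor(R, S) \in \mathcal{V}$, one has natural equivalences $Map_{Alg}(X \otimes R, S) \simeq Map_{\mathcal{V}}(X, Y)$ and $Map_{Alg}(R, S) \simeq Map_{\mathcal{V}}(\mathbf{1}_{\mathcal{V}}, Y)$, compatible with the map induced by $X \to \mathbf{1}_{\mathcal{V}}$. By Yoneda, it suffices to show that the precomposition map $Map_{\mathcal{V}}(\mathbf{1}_{\mathcal{V}}, Y) \to Map_{\mathcal{V}}(X, Y)$ is an equivalence for every $S$.

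Next I would exploit the coproduct-idempotence of $R$. Combining the enriched adjunction with the fact that, for any $V \in \mathcal{V}$, the functor $V \otimes (-)\colon Alg_{\mathcal{O}}({\mathcal{C}}) \to Alg_{\mathcal{O}}({\mathcal{C}})$ preserves colimits (being a left adjoint, via corollary \ref{cor:cotensors.on.algo(C)}), a Yoneda computation in $\mathcal{V}$ shows $Mor(A \coprod B, S) \simeq Mor(A, S) \times Mor(B, S)$ for all $A, B, S$. Applied to $R \coprod R \simeq R$, this yields an equivalence $Y \simeq Y \times Y$ in $\mathcal{V}$, which one identifies with the diagonal $\Delta$ as follows: taking $S = R$, the same computation shows $Mor(R,R)$ is $(-1)$-truncated and, containing $id_R$, is contractible; hence every self-equivalence of $R$ is equivalent to $id_R$, and in particular any equivalence $\phi \colon R \coprod R \xrightarrow{\sim} R$ satisfies $\phi \circ i_k \simeq id_R$ for both inclusions $i_k$, i.e.\ is the fold up to homotopy. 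Therefore $\Delta \colon Y \to Y \times Y$ is an equivalence, so $Y$ is subterminal in $\mathcal{V}$ and $Map_{\mathcal{V}}(Z, Y)$ is empty or contractible for every $Z$.

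To conclude, the composite $\mathbf{1}_{\mathcal{V}} \to X \to \mathbf{1}_{\mathcal{V}}$ is the identity by terminality of $\mathbf{1}_{\mathcal{V}}$; precomposition therefore exhibits $Map_{\mathcal{V}}(\mathbf{1}_{\mathcal{V}}, Y)$ as a retract of $Map_{\mathcal{V}}(X, Y)$, and a retract relationship between two $(-1)$-truncated spaces is automatically an equivalence, since both are simultaneously inhabited or empty. Yoneda then delivers the desired equivalence $X \otimes R \simeq R$ in $Alg_{\mathcal{O}}({\mathcal{C}})$. I expect the main obstacle to be the second paragraph: making the identification $Mor(R \coprod R, S) \simeq Mor(R, S) \times Mor(R, S)$ and its realisation as the diagonal fully precise at the level of $\infty$-categorical enriched morphism objects. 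The remainder of the argument is a clean consequence of $\mathcal{V}$ being cartesian monoidal, with $\mathbf{1}_{\mathcal{V}}$ serving both as monoidal unit and as terminal object.
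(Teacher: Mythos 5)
Your overall strategy is the same as the paper's: use the coproduct-idempotence of $R$ to show that mapping spaces out of $R$ are $(-1)$-truncated, then play the pointing $\mathbf{1}_{\mathcal{V}} \to X$ against the collapse $X \to \mathbf{1}_{\mathcal{V}}$ to conclude that the two relevant mapping spaces are simultaneously empty or simultaneously contractible. The packaging differs: the paper transposes $X$ onto the test object via the cotensor, $Map(X \otimes R, E) \simeq Map(R, E^X)$, and compares $R$-locality of $E$ and $E^X$, whereas you transpose to the $\mathcal{V}$-side, work with the morphism object $Y = Mor(R,S)$, and show $Y$ is subterminal. Your version is arguably a little cleaner in that it needs only the enrichment of $Alg_{\mathcal{O}}(\mathcal{C})$ and the fact that $V \otimes (-)$ preserves coproducts (proposition \ref{prop:tensoring.preserves.colimits} and its corollary), rather than the cotensor itself; the final retract argument between $(-1)$-truncated spaces is correct.

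The gap is exactly where you predicted it, and your proposed repair is circular. From $Mor(A \coprod B, S) \simeq Mor(A,S) \times Mor(B,S)$ and the given equivalence $R \coprod R \simeq R$ you obtain only an \emph{abstract} equivalence $Y \simeq Y \times Y$, and this does not imply that the diagonal $\Delta: Y \to Y \times Y$ is an equivalence (a countably infinite discrete space satisfies $W \simeq W \times W$ without being subterminal). You try to fix this by taking $S=R$ and asserting that ``the same computation'' shows $Mor(R,R)$ is $(-1)$-truncated; but that computation again yields only an abstract equivalence $Mor(R,R) \simeq Mor(R,R) \times Mor(R,R)$, and passing from such an equivalence to $(-1)$-truncatedness is precisely the inference in question. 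What is actually needed is that the idempotence is compatible with the coprojections: if, as in the intended applications (idempotent algebras in the sense of \cite[Section 4.8.2]{Lurie2017}, where the coproduct in $CAlg$ is the tensor product), a coprojection $i_1: R \to R \coprod R$ is an equivalence, then the fold map $\nabla: R \coprod R \to R$ satisfies $\nabla \circ i_1 \simeq id$ and is therefore an equivalence, whence $\Delta = \nabla^{\ast}$ is an equivalence and $Y$ is genuinely subterminal. It is worth noting that the paper's own Lemma \ref{lemma:we.have.a.mono} elides the same point -- it identifies the fibre product with $Map(R \coprod R, E) \simeq Map(R,E)$ and stops, without checking that the diagonal corresponds to an equivalence under that identification -- so your instinct to address the issue explicitly was sound; the fix just has to come from the form of the idempotence equivalence, not from the conclusion you are trying to establish.
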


\begin{proof}
Let $E \in Alg_{\mathcal{O}}({\mathcal{C}})$ be any $O$ algebra in ${\mathcal{C}}$.
The following holds:
    
    \begin{lemma}\label{lemma:we.have.a.mono}
        We have a $(-1)$-truncated morphism (also called a monomorphism):
    \[
        Map_{Alg_{\mathcal{O}}({\mathcal{C}})}(R, E) \hookrightarrow {Map_{Alg_{\mathcal{O}}({\mathcal{C}})}(\mathbf{0}_{\mathcal{C}}, E)} 
    \]
    where $\mathbf{0}_{\mathcal{C}}$ is an initial object of $\mathcal(C)$.
    \end{lemma}
    For the theory of truncated objects and morphisms, we remand the reader to \cite[Section 5.5.6]{Lurie2009}. The proof of this lemma will be given afterwards.
    
    By proposition \ref{prop:tensor.algebras} the induced action of $\mathcal{V}^{\otimes}$ on $Alg_{\mathcal{O}}({\mathcal{C}})$ is tensored and enriched and by corollary \ref{cor:cotensors.on.algo(C)} it admits all cotensors. We then obtain the following diagram:

\begin{center}
\begin{tikzcd}
    Map_{Alg_{\mathcal{O}}({\mathcal{C}})}(R,E) \arrow[r, hook] \arrow[d] 
    & Map_{Alg_{\mathcal{O}}({\mathcal{C}})}(\mathbf{0}_{\mathcal{C}}, E) \cong \star
    \\ Map_{Alg_{\mathcal{O}}({\mathcal{C}})}(X \otimes R, E) \arrow[d, "\sim" {rotate=90, anchor=north}] &
    \\
    Map_{Alg_{\mathcal{O}}({\mathcal{C}})}(R, E^X) \arrow[r, hook]
    & Map_{Alg_{\mathcal{O}}({\mathcal{C}})}(\mathbf{0}_{\mathcal{C}}, E^X) \cong \star
\end{tikzcd}\addtocounter{tikzcomd}{1}
\end{center}

    We have in particular remarked that the mapping spaces out of an initial object are contractible and non-empty \cite[Proposition 1.2.12.4 and Remark 1.2.12.6]{Lurie2009}, so homotopy equivalent to a point.
 
    This implies that $Map_{Alg_{\mathcal{O}}({\mathcal{C}})}(R, E)$ and $Map_{Alg_{\mathcal{O}}({\mathcal{C}})}(R, E^X)$ are contractible as well. In fact, from \cite[Definition 5.5.6.8]{Lurie2009}, a morphism of spaces $f:X \to Y$ is $(-1)$-truncated morphism if and only if the fibres over any point of $Y$ are $(-1)$-truncated, which means that they are contractible. In our case $Map_{Alg_{\mathcal{O}}({\mathcal{C}})}(\mathbf{0}_{\mathcal{C}}, E)$ (or $Map_{Alg_{\mathcal{O}}({\mathcal{C}})}(\mathbf{0}_{\mathcal{C}}, E^X)$) is equivalent to a single point, so we get our assertion.
    
    This allows only two choices for the mapping space $Map(R,E)$: it is either equivalent to a point (in which case we say that $E$ is $R$-local) or empty. This holds analogously for the space  $Map(R, E^X)$. We show:
    
    \begin{clm}\label{clm:E.R.local.iff.EX}
        $E$ is $R$-local $\iff E^X$ is $R$-local.
    \end{clm}
        
    In fact, observe that according to \ref{rmk:unit}:
    \begin{itemize}
        \item By using the collapse $X \to 1_{\mathcal{V}}$, given any map $R \to E$, we have:
        \[
            R \to E \cong E^{1_{\mathcal{V}}} \to E^X
        \]
        \item By using the pointing $1_{\mathcal{V}} \to X$, given any map $R \to E^X$, we have:
        \[
            R \to E^X \to E^{\star} \cong E.
        \]
    \end{itemize}
    Hence, we have a map from $R \to E$ if and only if we have a map $R \to E^X$, which proves \ref{clm:E.R.local.iff.EX}. We can conclude that, for all $E$, $Map(R,E) \cong Map(R \otimes X, E)$,  as we wished to show.
\end{proof}

\begin{proof}[Proof of Lemma \ref{lemma:we.have.a.mono}]
 We recall the following results.

    \begin{lemma}{\cite[Example 5.5.6.13]{Lurie2009}}
        A morphism $f:X \to Y$ in an $\infty$-category $\mathcal{D}$ is $(-2)$-truncated if and only if it is an equivalence.
    \end{lemma}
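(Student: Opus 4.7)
This statement is cited from \cite[Example 5.5.6.13]{Lurie2009}, so the plan is essentially to unwind definitions; no deep content is required, but I should still verify that the two characterisations of ``$(-2)$-truncated'' agree.

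First, I recall the definition: a morphism $f:X\to Y$ in $\mathcal{D}$ is $n$-truncated if for every object $Z\in\mathcal{D}$ the induced map of mapping spaces
\[
    f_*:\mathrm{Map}_{\mathcal{D}}(Z,X) \to \mathrm{Map}_{\mathcal{D}}(Z,Y)
\]
is an $n$-truncated map of spaces, i.e.\ all its homotopy fibres are $n$-truncated. Since an $n$-truncated space for $n=-2$ is by convention a contractible space, the condition that $f$ be $(-2)$-truncated amounts to saying that every homotopy fibre of $f_*$ is contractible, hence that $f_*$ itself is a weak homotopy equivalence for every $Z$.

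From here the conclusion is essentially the Yoneda lemma for $\infty$-categories: if the map $\mathrm{Map}_{\mathcal{D}}(Z,X)\to \mathrm{Map}_{\mathcal{D}}(Z,Y)$ is an equivalence for every $Z\in\mathcal{D}$, then $f$ is an equivalence in $\mathcal{D}$ (by \cite[Proposition 5.2.7.4]{Lurie2009}, or directly by setting $Z=Y$ to produce a section and $Z=X$ to check the composites are homotopic to identities). The converse is clear, since post-composition with an equivalence is an equivalence on mapping spaces.

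An alternative approach I would keep in mind is the slice-category characterisation: $f:X\to Y$ is $n$-truncated iff the object $(X,f)\in \mathcal{D}_{/Y}$ is $n$-truncated. A $(-2)$-truncated object is by definition a terminal object, and the terminal object of $\mathcal{D}_{/Y}$ is $(Y,\mathrm{id}_Y)$; hence $(X,f)$ is $(-2)$-truncated iff it is equivalent to $(Y,\mathrm{id}_Y)$ in $\mathcal{D}_{/Y}$, which happens exactly when $f$ is an equivalence in $\mathcal{D}$. There is no genuine obstacle in either route; the only care to take is to use the correct convention for ``$(-2)$-truncated space equals contractible space'' and for the indexing of truncated objects in slice categories.
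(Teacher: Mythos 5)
Your proof is correct and matches what the paper does: the paper simply cites \cite[Example 5.5.6.13]{Lurie2009}, and your argument is exactly the definitional unwinding behind that reference (fibres of $f_*$ contractible for all $Z$, hence $f_*$ an equivalence for all $Z$, hence $f$ an equivalence by Yoneda). Both the Yoneda route and the slice-category route you sketch are standard and valid here.
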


    \begin{lemma}{\cite[Lemma 5.5.6.15]{Lurie2009}} \label{lemma:lurie09.5.5.6.15}
    Let $\mathcal{D}$ be an $\infty$-category that admits finite limits and let $k \geq -1$ be an integer. A morphism $f:D \to D'$ in $\mathcal{D}$ is $k$-truncated if and only if the diagonal $\delta: D \to D \times_{D'} D $ is $(k-1)$-truncated.        
    \end{lemma}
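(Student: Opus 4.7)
The plan is to reduce the statement to the analogous assertion for maps of spaces, where it becomes a classical calculation of homotopy fibers, and then to invoke the standard recursive characterization of $k$-truncated spaces. The reduction proceeds via mapping-space functors, which by definition detect $k$-truncatedness.

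First, I would unwind the definition: a morphism $f : D \to D'$ in $\mathcal{D}$ is $k$-truncated if and only if, for every object $X \in \mathcal{D}$, the induced map of spaces
\[
f_{*} : \mathrm{Map}_{\mathcal{D}}(X, D) \to \mathrm{Map}_{\mathcal{D}}(X, D')
\]
is $k$-truncated in $\mathcal{S}$. Since $\mathcal{D}$ admits finite limits, the pullback $D \times_{D'} D$ exists, and the covariant mapping-space functor $\mathrm{Map}_{\mathcal{D}}(X, -) : \mathcal{D} \to \mathcal{S}$ preserves finite limits; hence
\[
\mathrm{Map}_{\mathcal{D}}(X, D \times_{D'} D) \simeq \mathrm{Map}_{\mathcal{D}}(X, D) \times_{\mathrm{Map}_{\mathcal{D}}(X, D')} \mathrm{Map}_{\mathcal{D}}(X, D),
\]
under which the diagonal $\delta : D \to D \times_{D'} D$ corresponds to the diagonal of $f_{*}$. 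Consequently, testing the lemma for $f$ in $\mathcal{D}$ reduces, uniformly in $X$, to testing the corresponding statement for $f_{*}$ in $\mathcal{S}$, and it suffices to prove the lemma when $\mathcal{D} = \mathcal{S}$.

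Second, I would reduce the spaces case to a statement about homotopy fibers. A map of spaces $f : A \to B$ is $k$-truncated iff each of its homotopy fibers is a $k$-truncated space. For a point $(a_1, a_2, \gamma) \in A \times_B A$ (two points of $A$ and a path $\gamma$ between their images in $B$), a direct computation identifies the homotopy fiber of $\delta$ at this point with the path space $\mathrm{Path}_{F}(a_1, a_2')$ inside the homotopy fiber $F$ of $f$ over $f(a_1)$, where $a_2'$ is the transport of $a_2$ along $\gamma$. Thus the fibers of $\delta$ are precisely the path spaces of the fibers of $f$.

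Third, I would invoke the recursive characterization of truncated spaces: a space $F$ is $k$-truncated (for $k \geq 0$) if and only if for every pair of points $x, y \in F$ the path space $\mathrm{Path}_{F}(x, y)$ is $(k-1)$-truncated. Combining this with the fiber identification above gives, for each $k \geq 0$, the equivalence between fibers of $f$ being $k$-truncated and fibers of $\delta$ being $(k-1)$-truncated, which is the spaces case of the lemma.

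Finally, I would handle the boundary case $k = -1$ separately, since $(-1-1) = -2$ means $\delta$ must be an equivalence. Here $f$ is $(-1)$-truncated precisely when each fiber is empty or contractible, equivalently when each path space inside each fiber is contractible, equivalently when each fiber of $\delta$ is contractible, equivalently when $\delta$ is an equivalence; this is just the usual characterization of monomorphisms. The principal technical point I expect to require some care is the identification in the second step of the fibers of $\delta$ with path spaces of fibers of $f$: one must fix the chosen path $\gamma$ in the base and compare two pullbacks. Everything else is a bookkeeping consequence of the truncatedness recursion together with the Yoneda-type reduction of the first step.
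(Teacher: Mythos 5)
The paper does not prove this lemma: it is quoted verbatim from \cite[Lemma 5.5.6.15]{Lurie2009} and used as a black box in the proof of Lemma \ref{lemma:we.have.a.mono}, so there is no in-paper argument to compare against. Your proposal is a correct reconstruction of the standard proof, and it follows the same strategy as Lurie's own: reduce to the $\infty$-category of spaces via the limit-preserving functors $\mathrm{Map}_{\mathcal{D}}(X,-)$, identify the homotopy fibers of the diagonal $\delta$ with path spaces of the homotopy fibers of $f$, and conclude from the recursive characterization of $k$-truncated spaces, with the $k=-1$ case handled by the monomorphism criterion. No gaps; the only step genuinely requiring care is the fiber-of-the-diagonal computation, which you correctly flag.
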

        
    Hence, to prove that we have a monomorphism $Map_{Alg_{\mathcal{O}}({\mathcal{C}})}(R, E) \hookrightarrow Map_{Alg_{\mathcal{O}}({\mathcal{C}})}(\mathbf{0}_{\mathcal{C}}, E)$, we are reduced to showing that the diagonal:
    \[
        Map_{Alg_{\mathcal{O}}({\mathcal{C}})}(R, E) \to Map_{Alg_{\mathcal{O}}({\mathcal{C}})}(R, E) \times_{Map_{Alg_{\mathcal{O}}({\mathcal{C}})}(\mathbf{0}_{\mathcal{C}}, E)} Map_{Alg_{\mathcal{O}}({\mathcal{C}})}(R, E)
    \]
    is $(-2)$-truncated, in other words, an equivalence. By direct application of \cite[Corollary 4.3.25]{Land2021}:
    \begin{multline*}
        Map_{Alg_{\mathcal{O}}({\mathcal{C}})}(R, E) \times_{Map_{Alg_{\mathcal{O}}({\mathcal{C}})}(\mathbf{0}_{\mathcal{C}}, E)} Map_{Alg_{\mathcal{O}}({\mathcal{C}})}(R, E) \\
        \cong Map_{Alg_{\mathcal{O}}({\mathcal{C}})}(R \coprod_{\mathbf{0}_{\mathcal{C}}} R, E) 
        \cong Map_{Alg_{\mathcal{O}}({\mathcal{C}})}(R, E)
    \end{multline*}
\end{proof}

\clearpage

\section{The motivic context and \texorpdfstring{$MHH(M\mathbb{Q})$}{MHH(MQ)}}\label{sec:motivic}

Let now $S$ be a nice (for instance, quasi compact and quasi separated, or better) scheme, and let $Spc(S)_{\bullet}$ and $SH(S)$ be Morel and Voevodski's categories of pointed motivic spaces and motivic spectra, seen as $\infty$-categories \cite{Robalo2015}. Observe that this pair of categories satisfies all the hypotheses of theorem \ref{thm:X.otimes.R.cong.R}. In particular, the infinite suspension functor:
\[
    \Sigma^{\infty}: Spc(S)_{\bullet} \to SH(S)
\]
extends to a colimit-preserving symmetric monoidal functor, once we endow these categories with the usual tensor product given by the smash. Hence motivic spectra are tensored on pointed motivic spaces. Moreover, both categories are presentable (presentably symmetric monoidal in fact), so the action admits right adjoints both when either the first or the second variable is fixed (cotensors and morphism objects, respectively). 

We apply theorem \ref{thm:X.otimes.R.cong.R} when $\mathcal{O}$ is chosen in particular to be the commutative operad $\mathbb{E}_{\infty}$. In this case, it is worth noting that $CAlg(SH(S))$ inherits a tensor product that identifies with the coproduct \cite[Proposition 3.2.4.7]{Lurie2017}: in this case our ``idempotent'' objects coincide in fact with the idempotent objects with respect to this monoidal structure.  We then obtain the following result:

\begin{prop}\label{prop:idempotent}
Let $X$ be a pointed motivic space and let $R \in CAlg(SH(S))$ be a highly commutative ring spectrum; suppose that $R$ is idempotent. Consider the collapse on the point morphism $X \to \star$. Then the induced map $X \otimes R \to \star \otimes R \cong R$ is an equivalence. 
\end{prop}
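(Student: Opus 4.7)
The plan is to deduce the proposition directly from Theorem \ref{thm:X.otimes.R.cong.R}, after checking that the motivic data fits the abstract framework of that theorem.

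First, I would set $\mathcal{V} = Spc(S)_\bullet$, $\mathcal{C} = SH(S)$, $\mathcal{O} = \mathbb{E}_\infty$, and take the monoidal transfer $\phi$ to be the infinite suspension functor. As observed in the paragraph preceding the proposition, these categories satisfy the structural hypotheses of Theorem \ref{thm:X.otimes.R.cong.R}: both are presentably symmetric monoidal with colimit-preserving tensor products, $\phi$ is colimit-preserving and symmetric monoidal, and the presentability of $SH(S)$ upgrades the induced action to one that is both tensored and enriched, with cotensors and morphism objects furnished by the adjoint functor theorem.

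Next I would translate the idempotence assumption into the coproduct-idempotence hypothesis demanded by Theorem \ref{thm:X.otimes.R.cong.R}. The paragraph before the statement already recalls that, by \cite[Proposition 3.2.4.7]{Lurie2017}, the induced monoidal structure on $CAlg(SH(S)) = Alg_{\mathbb{E}_\infty}(SH(S))$ coincides with the coproduct; hence $R$ being idempotent with respect to that monoidal structure is the same as $R \coprod R \simeq R$ in $Alg_{\mathbb{E}_\infty}(SH(S))$, which is exactly the hypothesis on $R$ appearing in the theorem.

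Finally, the pointing and collapse data required by Theorem \ref{thm:X.otimes.R.cong.R} are supplied directly by the proposition's hypotheses: the basepoint of the pointed motivic space $X$ provides the pointing $\mathbf{1}_\mathcal{V} \to X$, while the collapse $X \to \star$ is the canonical map to the tensor unit. The theorem then yields the equivalence $X \otimes R \xrightarrow{\sim} \mathbf{1}_\mathcal{V} \otimes R \simeq R$, which is the statement of the proposition. Since all of the genuinely technical work has already been absorbed into Theorem \ref{thm:X.otimes.R.cong.R}, the main (and essentially only) delicate step is the preliminary identification of the motivic setting with the abstract framework, in particular making precise the sense in which the tensor unit of $\mathcal{V}$ plays the role of the collapse target $\star$; this is largely bookkeeping carried out in the surrounding discussion.
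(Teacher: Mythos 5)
Your proposal is correct and follows essentially the same route as the paper, which gives no separate proof of this proposition but derives it directly from Theorem \ref{thm:X.otimes.R.cong.R} by taking $\mathcal{V} = Spc(S)_\bullet$, $\mathcal{C} = SH(S)$, $\mathcal{O} = \mathbb{E}_\infty$, $\phi = \Sigma^\infty$, and identifying idempotence in $CAlg(SH(S))$ with coproduct-idempotence via \cite[Proposition 3.2.4.7]{Lurie2017}. Your closing remark about making precise how the unit of $Spc(S)_\bullet$ plays the role of the collapse target $\star$ is exactly the point the paper also leaves to the surrounding discussion, so the two arguments coincide.
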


As an example of application, we now focus on the motivic ring spectrum $M\mathbb{Q}=M\mathbb{Z}\otimes \mathbb{Q}$ representing rational motivic cohomology. 

It is a highly commutative and idempotent motivic ring spectrum. In fact, commutativity of $M\mathbb{Q}$ descends from the commutativity of $M\mathbb{Z}$ (see, for instance, \cite{Spitzweck2012}). On the other hand, idempotence was proven by Cisinski and Deglise in \cite{CisDeg2019}; they first constructed an idempotent motivic ring spectrum $H_B$ \cite[Proposition 14.1.6]{CisDeg2019}, which was later identified with the usual $M\mathbb{Q}$ \cite[Corollary 16.1.7]{CisDeg2019}. We can then apply proposition \ref{prop:idempotent} to rational motivic cohomology; in particular, we have the following consequence:

\begin{cor} \label{cor:MQ.otimes.anything}
One has:
\begin{gather*}
 MHH(M\mathbb{Q}) \cong S^1_s \otimes M \mathbb{Q} \cong M \mathbb{Q} \qquad
 \mathbb{G}_m \otimes M\mathbb{Q} \cong M\mathbb{Q}\\
 \mathbb{P}^1 \otimes M\mathbb{Q} \cong M\mathbb{Q}.
\end{gather*}
\end{cor}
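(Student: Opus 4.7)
The strategy is to apply Proposition \ref{prop:idempotent} directly, having verified that $M\mathbb{Q}$ satisfies the hypotheses and that each of $S^1_s$, $\mathbb{G}_m$, $\mathbb{P}^1$ can be viewed as a pointed motivic space endowed with a structural collapse to the point. The text just before the corollary already establishes both properties of $M\mathbb{Q}$: commutativity is inherited from $M\mathbb{Z}$ (cf. Spitzweck), and idempotence follows from the identification of $M\mathbb{Q}$ with the Beilinson spectrum $H_B$ of Cisinski--D\'eglise, which is idempotent by construction. Under the identification of the coproduct in $CAlg(\mathcal{SH}(S))$ with the tensor product (via \cite[Proposition 3.2.4.7]{Lurie2017}), idempotence of $M\mathbb{Q}$ as an object of $CAlg(\mathcal{SH}(S))$ is exactly the hypothesis $R \coprod R \simeq R$ needed for Theorem \ref{thm:X.otimes.R.cong.R}.

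For the three equivalences I would proceed case by case. The spaces $S^1_s$ (the simplicial circle, pointed in the standard way), $\mathbb{G}_m$ (pointed at $1$), and $\mathbb{P}^1$ (pointed at $\infty$) all lie in $\mathrm{Spc}(S)_\bullet$, so each admits a canonical map to the terminal object $\star$. Proposition \ref{prop:idempotent} applied to $R = M\mathbb{Q}$ and $X = S^1_s,\ \mathbb{G}_m,\ \mathbb{P}^1$ respectively yields the three required equivalences $X \otimes M\mathbb{Q} \simeq \star \otimes M\mathbb{Q} \simeq M\mathbb{Q}$.

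Finally, for the motivic Hochschild homology statement, the identification $MHH(M\mathbb{Q}) \cong S^1_s \otimes M\mathbb{Q}$ is the content of the computation recalled in the introduction: since $M\mathbb{Q}$ is an $\mathbb{E}_\infty$-algebra, the bar resolution argument collapses $MHH(R)$ to the geometric realisation of the cyclic bar construction, which is the tensoring of $R$ by the simplicial model $\Delta^1/\partial\Delta^1 = S^1_s$ in $CAlg(\mathcal{SH}(S))$. Combined with the previous paragraph this gives $MHH(M\mathbb{Q}) \simeq M\mathbb{Q}$.

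There is essentially no hard step: all the real work has been done in Theorem \ref{thm:X.otimes.R.cong.R} and in the preceding sections establishing the enriched tensor/cotensor structure on $CAlg(\mathcal{SH}(S))$. The only thing one must be slightly careful about is that the tensor $X \otimes R$ really refers to the action of $\mathrm{Spc}(S)_\bullet$ on $CAlg(\mathcal{SH}(S))$ produced by the machinery of Section \ref{sec:enr.adj}, rather than the smash product in $\mathcal{SH}(S)$ itself; this is exactly guaranteed by the discussion at the start of Section \ref{sec:motivic}, where the symmetric monoidal colimit-preserving functor $\Sigma^\infty \colon \mathrm{Spc}(S)_\bullet \to \mathcal{SH}(S)$ is used to transport the action to $CAlg(\mathcal{SH}(S))$.
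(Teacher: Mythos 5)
Your argument is correct and follows essentially the same route as the paper: the paper's proof likewise just observes that $S^1_s$, $\mathbb{G}_m$, and $\mathbb{P}^1 \cong S^1_s \wedge \mathbb{G}_m$ are pointed motivic spaces, applies Proposition \ref{prop:idempotent} (whose hypotheses on $M\mathbb{Q}$ are verified in the preceding discussion via Spitzweck and Cisinski--D\'eglise), and refers back to the introduction for the identification $MHH(M\mathbb{Q}) \cong S^1_s \otimes M\mathbb{Q}$. Your additional remarks on the identification of the coproduct with the tensor product in $CAlg(\mathcal{SH}(S))$ and on the provenance of the $\mathrm{Spc}(S)_\bullet$-action simply make explicit what the paper establishes just before the corollary.
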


\begin{proof}
Just observe that $S^1_s,\,\mathbb{G}_m$ and $\mathbb{P}^1\cong S^1_s \wedge \mathbb{G}_m \in Spc(S)_{\bullet}$ are pointed motivic spaces; the equivalence $MHH(M\mathbb{Q}) \cong S^\mathbf{1}_s \otimes M \mathbb{Q}$ was already discussed in greater generality in the introduction.
\end{proof}

\begin{rmk}
One could get the result about motivic Hochschild homology straightforwardly from the definition: let $R$ be an idempotent, highly commutative ring spectrum. Then $R \wedge R \cong R$ by \cite[Proposition 4.8.2.9]{Lurie2017}. This implies:
\[
    MHH(R) \cong R \wedge_{R \wedge R} R \cong R \wedge_R R \cong R.
\]
\end{rmk}

What should be noticed here is that the tensor between $M\mathbb{Q}$ and $S^\mathbf{1}_s$, or $\mathbb{G}_m$, or any motivic sphere (as they are given by the smash products of those two) agree and give $M\mathbb{Q}$ as the product.

\begin{rmk}
    The structure of $MHH(M\mathbb{Q})$ is very different from that of $MHH(M\mathbb{Z}/p)$, which was described in \cite{DHOO2022}; in fact, when the base scheme is an algebraically closed field of characteristic different from $p$, $\pi_{\star} MHH(M\mathbb{Z}/p)$ is an algebra over $\pi_{\star} M\mathbb{Z}/p=\mathbb{Z}/p[\tau]$ in infinitely many independent generators. Now, $M\mathbb{Q}=M\mathbb{Z}[1/2,1/3,1/5,\ldots]$; observe that also the homotopy ring structure of $MHH(M\mathbb{Z}/p)$ greatly simplifies after inverting a certain homotopy class, namely, $\tau$ (see again \cite{DHOO2022} for a presentation of $\pi_{\star} MHH(M\mathbb{Z}/p[\tau^{-1}])$). 
\end{rmk}

\clearpage

\printbibliography[heading=bibintoc]

\end{document}